\documentclass[letterpaper,reqno,12pt]{amsart}
\usepackage[margin=1.2in]{geometry}

\usepackage{amssymb, enumerate}
\usepackage[all]{xy}
\usepackage{hyperref}
\hypersetup{colorlinks=true}

\theoremstyle{plain}
\newtheorem{thm}{Theorem}[section] 
\newtheorem{cor}[thm]{Corollary}
\newtheorem{prop}[thm]{Proposition}
\newtheorem{conj}[thm]{Conjecture}
\newtheorem{lem}[thm]{Lemma}
\newtheorem*{mainthm}{Main Theorem}

\theoremstyle{definition} 
\newtheorem{defn}[thm]{Definition}
\newtheorem{notation}[thm]{Notation}

\newtheorem{eg}[thm]{Example} 

\newtheorem*{Notation}{Notation}

\theoremstyle{remark}
\newtheorem{rem}[thm]{Remark}

\newtheorem*{acknowledgement}{Acknowledgments}

\def\ge{\geqslant}
\def\le{\leqslant}
\def\phi{\varphi}
\def\epsilon{\varepsilon}

\def\Mod{\operatorname{\,mod \,}}

\newcommand{\sO}{\mathcal{O}}

\newcommand{\N}{\mathbb{N}}
\newcommand{\Q}{\mathbb{Q}}

\newcommand{\ba}{\mathfrak{a}}
\newcommand{\m}{\mathfrak{m}}
\newcommand{\n}{\mathfrak{n}}

\newsavebox{\circlebox}
\savebox{\circlebox}{\fontencoding{OMS}\selectfont\Large\char13}
\newlength{\circleboxwdht}

\def\Hom{\operatorname{Hom}}
\def\Spec{\operatorname{Spec}}
\def\Proj{\operatorname{Proj}}

\def\Div{\operatorname{div}}

\title{Finitistic test ideals on numerically $\Q$-Gorenstein varieties}

\author{Shunsuke Takagi}
\address{Graduate School of Mathematical Sciences, University of Tokyo, 3-8-1 Komaba, Meguro-ku, Tokyo 153-8914, Japan}
\email{stakagi@ms.u-tokyo.ac.jp}

\dedicatory{Dedicated to Professor~Craig~Huneke on the occasion of his sixty-fifth birthday.}

\subjclass[2010]{13A35, 14B05}
\keywords{Numerically $\Q$-Gorenstein, finitistic test ideals, big test ideals}

\begin{document}

\begin{abstract}
We prove that the finitistic test ideal $\tau_{\rm fg}(R, \Delta, \ba^t)$ coincides with the big test ideal $\tau_{\rm b}(R, \Delta, \ba^t)$ if the pair $(R,\Delta)$ is numerically log $\Q$-Gorenstein. 
\end{abstract}

\maketitle


\section{Introduction}
Let $R$ be an excellent Noetherian domain of characteristic $p>0$. 
The notion of test ideals has its origin in the theory of tight closure, which was introduced about 30 years ago by Hochster and Huneke \cite{HH1}. 
Tight closure is a closure operation performed on ideals in $R$, or more generally, on inclusions of $R$-modules.  
The test ideal defined in \cite{HH1} is generated by test elements, or those elements which can be used to test the tight closure membership for all inclusions of finitely generated $R$-modules. 
This is the etymology of the name ``test ideal". 
This test ideal is nowadays called the finitistic test ideal of $R$ and denoted by $\tau_{\rm fg}(R)$. 

Later, another test ideal, called the big test ideal of $R$ and denoted by $\tau_{\rm b}(R)$, was introduced. 
It is generated by big test elements, or those elements used to test the tight closure membership for all inclusions of (not necessarily finitely generated) $R$-modules. 
The big test ideal fits better into 
the theory of Frobenius splittings and can be characterized without using tight closure. 
Indeed, Schwede \cite{Sc} proved that (if $R$ is $F$-finite) $\tau_{\rm b}(R)$ is the unique smallest nonzero ideal $J \subset R$ such that $\phi(J^{1/p^e}) \subset J$ for all $e \in \N$ and all $\phi \in \Hom_R(R^{1/p^e}, R)$.  
The big test ideal quickly began finding applications in its own right. 
Now this notion is becoming a fundamental tool in birational geometry and singularity theory in positive characteristic.

It is clear from definition that $\tau_{\rm b}(R)$ is contained in $\tau_{\rm fg}(R)$, and they are conjectured to be equal. 
\begin{conj}[\textup{\cite[Conjecture 5.14]{ST}}]\label{main conj}
$\tau_{\rm b}(R)=\tau_{\rm fg}(R)$.
\end{conj}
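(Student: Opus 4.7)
The plan is to reduce the conjecture, after localization and completion, to showing the equality of two tight closures of zero in the top local cohomology module. Via Matlis duality one has $\tau_{\rm b}(R) = \operatorname{Ann}_R(0^*_{H^d_\m(R)})$ and $\tau_{\rm fg}(R) = \operatorname{Ann}_R(0^{*{\rm fg}}_{H^d_\m(R)})$, where $0^{*{\rm fg}}$ is the ``finitistic'' version of tight closure that tests only through the finitely generated submodules $R/(x_1^n,\ldots,x_d^n)$ whose direct limit is $H^d_\m(R)$. Since $0^* \supset 0^{*{\rm fg}}$ is trivial, the task becomes to promote, for each $\eta \in 0^*_{H^d_\m(R)}$, a big test witness into a finitistic one valid on some single truncation.

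The main tool I would use is the characterization quoted in the excerpt: $\tau_{\rm b}(R)$ is the smallest nonzero ideal stable under every $\phi \in \Hom_R(F^e_* R, R)$. In the setting of the paper with boundary $\Delta$ and ideal $\ba^t$, the analogue replaces the Hom module by $\Hom_R(F^e_* R(\lceil (p^e-1)\Delta \rceil), R)$ together with a multiplicative twist by $\ba^{\lceil t(p^e-1) \rceil}$. If these Hom modules can be shown to be essentially cyclic over $F^e_* R$, then a single canonical Cartier map dominates every $\phi$, and both test ideals reduce to the image of that canonical generator. Cyclicity is automatic when $K_R + \Delta$ is $\Q$-Cartier, since then the reflexive sheaf $\sO_R(\lceil (1-p^e)(K_R+\Delta) \rceil)$ is a line bundle and Grothendieck duality supplies a single trace.

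In the numerically log $\Q$-Gorenstein setting, I would replace global $\Q$-Cartierness with $\Q$-Cartierness on a log resolution $\pi : Y \to \Spec R$. Since $\pi^*(K_R+\Delta)$ is numerically trivial on $\pi$-exceptional curves, its direct image on $\Spec R$ behaves, for the purposes of constructing Frobenius traces, like a $\Q$-Cartier divisor. Concretely, I would establish cyclicity by showing that every $\phi \in \Hom_R(F^e_* R(\lceil (p^e-1)\Delta \rceil), R)$ factors through a canonical trace associated to an exceptional twist on $Y$, using Grothendieck duality on $Y$ together with the numerical triviality of the error. Once cyclicity is in hand, the $\Q$-Gorenstein argument transports and the finitistic and big versions collapse onto the same image.

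The principal obstacle is the gap between numerical and linear equivalence under rounding. Two numerically equivalent divisors need not be linearly equivalent, so $\sO_R(\lceil (p^e-1)(K_R+\Delta) \rceil)$ need not be the pullback of a line bundle even after birational correction, and these discrepancies accumulate across Frobenius iterates. I expect the deepest technical input to be a positive-characteristic relative vanishing statement on $Y$ that controls the resulting error terms; the argument should then split into a geometric step on $Y$ (establishing cyclicity of the Hom modules up to a numerical correction) and a Cartier-algebra step on $R$ (deducing the finitistic equality from that cyclicity). Outside the numerically log $\Q$-Gorenstein hypothesis I would not expect this line of attack to succeed, which is consistent with the conjecture still being open in full generality.
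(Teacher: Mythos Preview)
First, note that the statement is a conjecture, open in general; the paper proves it only under the numerically log $\Q$-Gorenstein hypothesis (Theorem~\ref{main thm}). Your proposal correctly retreats to that hypothesis, so the comparison is with the paper's proof of Theorem~\ref{main thm}.

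Your route and the paper's route diverge sharply after the common reduction to a complete local ring and to the equality $0^{*}_{H^n_\m(J)}=0^{*{\rm fg}}_{H^n_\m(J)}$. You propose to force this through cyclicity of $\Hom_R(F^e_*R(\lceil (p^e-1)\Delta\rceil),R)$ as an $F^e_*R$-module, hoping that numerical $\Q$-Cartierness on a resolution will substitute for genuine $\Q$-Cartierness. But cyclicity is exactly what fails when $K_X+\Delta$ is numerically but not actually $\Q$-Cartier: the reflexive module $R((1-p^e)(K_X+\Delta))$ is not invertible, and no amount of numerical triviality on a birational model produces a single generating trace downstairs. You acknowledge this and defer to an unspecified ``positive-characteristic relative vanishing statement''; that is the gap, and there is no known vanishing of this shape that repairs it. (A side issue: log resolutions need not exist in characteristic $p$ and dimension $\ge 4$; one must work with regular alterations as in de~Jong, which further complicates a duality-based argument.)

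The paper bypasses cyclicity entirely. Its engine is valuative: for a numerically $\Q$-Cartier divisor $B$ one has
\[
\lim_{m\to\infty}\frac{\nu\bigl(\sO_Z(mB)\sO_Z(-mB)\bigr)}{m}=0
\]
for every divisorial valuation $\nu$ (Lemma~\ref{valuation}). This asymptotic, applied with $B=\rho^*(\Delta-D)$ on a finite cover where $\rho^*\Delta$ is integral, manufactures a sequence $c_e\in S$ with $\nu(c_e)/p^e\to 0$ that uniformly bounds the Koszul kernels $\mathcal{K}(\mathbf{x}^{qs},\infty,J^{[q]}S(q\rho^*\Delta))$ inside $\mathcal{K}(\mathbf{x}^{qs},2,\,\cdot\,)$. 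The MacCrimmon--Williams mechanism (Lemma~\ref{key prop}) together with the Hochster--Huneke valuative criterion for tight closure (Proposition~\ref{valuative criterion}) then converts any big witness into a finitistic one on a fixed truncation. In short, the paper replaces your hoped-for cyclicity by an asymptotic smallness of the obstruction ideal measured by valuations, and that is precisely the content that numerical $\Q$-Cartierness supplies.
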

Conjecture \ref{main conj} is considered to be one of the most important conjectures in tight closure theory. 
When $R$ is normal and $\Q$-Gorenstein, Conjecture \ref{main conj} is known to hold (see \cite{AM}, \cite{BSTZ}, \cite{Ha}, \cite{HY}, \cite{Sm}, \cite{Ta}). 
What if $R$ is normal but not $\Q$-Gorenstein? 
Various results on $\tau_{\rm b}(R)$ in the $\Q$-Gorenstein setting were generalized in \cite{CEMS} to the case where the anti-canonical ring $\bigoplus_{i \ge 0}\sO_X(-iK_X)$ of $X=\Spec R$ (that is, the symbolic Rees algebra of the anti-canonical ideal of $R$) is finitely generated. In particular, they gave an affirmative answer to Conjecture \ref{main conj} in this case. 

In this paper, as another generalization of $\Q$-Gorensteinness, we consider the condition of being numerically $\Q$-Gorenstein. 
The notion of numerically $\Q$-Gorenstein varieties was introduced in \cite{BdFF}, \cite{BdFFU} when the base field is of characteristic zero, and is generalized to arbitrary characteristic in \cite{dFDTT} by making use of regular alterations instead of resolutions of singularities. 
Being numerically $\Q$-Gorenstein is a much weaker condition than being $\Q$-Gorenstein. 
For example, every normal surface is numerically $\Q$-Gorenstein. 
We also note that this condition is somewhat orthogonal to the finite generalization of the anti-canonical ring, because if $R$ satisfies both conditions, then it has to be $\Q$-Gorenstein (see Lemma \ref{Q-Cartier}). 

We give an affirmative answer to Conjecture \ref{main conj} when $R$ is numerically $\Q$-Gorenstein. 
More generally, we can prove the equality for a generalization of test ideals. 
Inspired by the theory of multiplier ideals, Hara-Yoshida \cite{HY} and Takagi \cite{Ta} generalized the big and finitistic test ideals $\tau_{\rm b}(R), \tau_{\rm fg}(R)$ to the ideals $\tau_{\rm b}(R, \Delta, \ba^t), \tau_{\rm fg}(R, \Delta, \ba^t)$ associated to a triple $(R, \Delta, \ba^t)$, where $\Delta$ is an effective $\Q$-Weil divisor on $\Spec R$, $\ba \subset R$ is an ideal, and $t>0$ is a real number. 
Our main result is stated for the ideals $\tau_{\rm b}(R, \Delta, \ba^t)$ and $\tau_{\rm fg}(R, \Delta, \ba^t)$. 

\begin{mainthm}[\textup{Theorem \ref{main thm}, Corollary \ref{two-dimensional}}]
Let $(R, \Delta, \ba^t)$ be a triple where $R$ is an $F$-finite normal domain and set $X=\Spec R$. 
Suppose that one of the following conditions is satisfied: 
\begin{enumerate}
\item[$(\textup{a})$] $R$ is essentially of finite type over a field and $K_X+\Delta$ is numerically $\Q$-Cartier, or
\item[$(\textup{b})$] $R$ is of dimension two. 
\end{enumerate}
Then 
$\tau_{\rm b}(R, \Delta, \ba^t)=\tau_{\rm fg}(R, \Delta, \ba^t)$. 
\end{mainthm}

For the proof, we employ the strategy of MacCrimmon \cite{Mc} and Williams \cite{Wi}. 
The assertion follows from valuative characterizations of tight closure and numerically $\Q$-Cartier divisors (Proposition \ref{valuative criterion} and Lemma \ref{valuation}). 

\begin{small}
\begin{acknowledgement}
 The author is grateful to Sho Ejiri, Osamu Fujino, Kenta Sato, Karl Schwede, Hiromu Tanaka, Kevin Tucker and Sho Yoshikawa for useful conversations. 
He is also indebted to the referee for careful reading of the manuscript and thoughtful  suggestions. 
 He would like to thank Imperial College London for support and hospitality during his visit in Spring 2017, where part of this work was done. 
He was partially supported by JSPS KAKENHI Grant Numbers JP15H03611, JP15KK0152, JP16H02141, and JP17H02831. 
\end{acknowledgement}
\end{small}

\begin{Notation}
Throughout this paper all rings are commutative rings with unity. 
Given a ring $R$, we denote by $R^{\circ}$ the set of elements of $R$ which are not in any minimal prime ideal.
Suppose that $R$ is a normal domain. Given a $\Q$-Weil divisor $\Delta=\sum_i d_i D_i$ on $X=\Spec R$, the round up (resp. the round down) of $\Delta$ is $\lceil \Delta \rceil=\sum_i \lceil d_i \rceil D_i$ (resp. $\lfloor \Delta \rfloor=\sum_i \lfloor d_i \rfloor D_i$) where $\lceil d_i \rceil$ denotes the smallest integer greater than or equal to $d_i$ (resp. $\lfloor d_i \rfloor$ denotes the largest integer less than or equal to $d_i$). 
For an integral Weil divisor $D$ on $X$, we will use the notation $R(D)$ to denote $H^0(X, \sO_X(D))$.  

All schemes are assumed to be Noetherian and separated. 
A variety over a field $k$ means an integral scheme of finite type over $k$.
\end{Notation}

\section{A quick review on test ideals}
In this section, we briefly review the definitions and basic properties of tight closure and test ideals. 

Let $R$ be a normal domain of characteristic $p>0$. 
Given an ideal $I$ of $R$ and an integer $e \in \N$, the $e$-th Frobenius power $I^{[p^e]}$ of $I$ is the ideal of $R$ generated by all $p^e$-th powers of elements of $I$. 

Fix an algebraic closure $L$ of the quotient field $\mathrm{Frac}(R)$ of $R$. 
Given a fractional ideal $J$ of $R$ and an integer $e \in \N$, set $J^{1/p^e}=\{y \in L \; | \; y^{p^e} \in J\}$ and denote an element $y \in J^{1/p^e}$ by $x^{1/p^e}$ where $x=y^{p^e} \in J$. 
We say that $R$ is \textit{$F$-finite} if $R^{1/p}$ is a finitely generated $R$-module. 
By definition, a field $K$ is $F$-finite if and only if $[K:K^p]<\infty$. 
More generally, if $R$ is a complete local ring with $F$-finite residue field or is essentially of finite type over an $F$-finite field, then it is $F$-finite.

\begin{defn}
In this paper, we say that $(R, \Delta, \ba^t)$ is a \textit{triple} in characteristic $p>0$ if $R$ is an excellent normal domain of characteristic $p>0$, $\Delta$ is an effective $\Q$-Weil divisor on $\Spec R$, $\ba$ is a nonzero ideal of $R$ and $t \ge 0$ is a real number. 
We say that the triple $(R, \Delta, \ba^t)$ is \textit{$F$-finite} if so is $R$. 

\end{defn}

\begin{defn}[\textup{\cite[Definition 3.1]{BSTZ}}]\label{tight closure defn}
Let $(R, \Delta, \ba^t)$ be a triple in characteristic $p>0$. 
\begin{enumerate}
\item[(i)] Let $I$ be an ideal in $R$. An element $x \in R$ is said to be in the \textit{$(\Delta, \ba^t)$-tight closure} $I^{*(\Delta, \ba^t)}$ of $I$ if there exists $c \in R^{\circ}$ such that 
\[c \ba^{\lceil t (q-1) \rceil} x^q \subset I^{[q]}R(\lceil (q-1) \Delta \rceil)
\]
for all large $q=p^e$. When $\ba=R$ (resp. $\Delta=0$ and $\ba=R$), we simply denote the ideal $I^{*(\Delta, \ba^t)}$ by $I^{*\Delta}$ (resp. $I^*$). 
\item[(ii)] Let $M$ be an $R$-module. 
An element $z \in M$ is said to be in the \textit{$(\Delta, \ba^t)$-tight closure} $0_M^{*(\Delta, \ba^t)}$ of the zero submodule in $M$ if there exists $c \in R^{\circ}$ such that 
\[(c \ba^{\lceil t (q-1) \rceil})^{1/q} \otimes z=0 \textup{ in } R(\lceil (q-1) \Delta \rceil)^{1/q} \otimes_R M
\]
for all large $q=p^e$. 
We say that $z$ is in the \textit{finitistic tight closure} $0_M^{*(\Delta, \ba^t){\rm fg}}$ of the zero submodule in $M$ if there exists a finitely generated $R$-submodule $N \subset M$ such that $z \in 0_{N}^{*(\Delta, \ba^t)}$. In other words, 
\[0_M^{*(\Delta, \ba^t){\rm fg}}=\bigcup_{N \subset M} 0_{N}^{*(\Delta, \ba^t)}\] where $N$ runs through all finitely generated $R$-submodules of $M$.  
\item[(iii)] We say that an element $c \in R^{\circ}$ is a \textit{big sharp test element} for $(R, \Delta, \ba^t)$ if for all $R$-modules $M$ and all $z \in 0_M^{*(\Delta, \ba^t)}$, we have 
\[(c \ba^{\lceil t (q-1) \rceil})^{1/q} \otimes z=0 \textup{ in } R(\lceil (q-1) \Delta \rceil)^{1/q} \otimes_R M
\]
for every $q=p^e$. 
When $\Delta=0$ and $\ba=R$, we refer to such elements as big test elements for $R$. 
\end{enumerate}
\end{defn}

\begin{rem}
Definition \ref{tight closure defn} (i), (ii) do not change even if we replace $\ba^{\lceil t (q-1) \rceil}$ (resp. $R(\lceil (q-1) \Delta \rceil)$ by $\ba^{\lceil tq \rceil}$ (resp. $R(\lceil q \Delta \rceil)$). 
However, Definition \ref{tight closure defn} (iii) does change if we make such a replacement. 
\end{rem}

Big sharp test elements always exist if the ring is $F$-finite. 
\begin{lem}[\textup{\cite[Lemma 2.17]{Sc}}]\label{test element exists}
Let $(R, \Delta, \ba^t)$ be an $F$-finite triple. 
If we choose an element $c \in R^{\circ}$ such that the localization $R_c$ is regular, $\Div(c) \ge \Delta$ and $\ba R_c=R_c$, then some power $c^n$ of $c$ is a big sharp test element for $(R, \Delta, \ba^t)$. 
\end{lem}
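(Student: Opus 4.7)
The plan is to promote the ad hoc witness $c' \in R^\circ$ that appears in the definition of $(\Delta, \ba^t)$-tight closure into a uniform power $c^n$ depending only on $c$. The three hypotheses on $c$ --- regularity of $R_c$, $\Div(c)\ge\Delta$, and $\ba R_c = R_c$ --- are designed precisely so that on the open set $\Spec R_c$ all the multiplier data collapses and we are left with tight closure over a regular $F$-finite ring.

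First I unpack the hypothesis: given $z \in 0_M^{*(\Delta,\ba^t)}$, there exists $c' \in R^\circ$ and $q_0 = p^{e_0}$ with
\begin{equation*}
(c' \ba^{\lceil t(q-1)\rceil})^{1/q}\otimes z = 0 \quad\text{in}\quad R(\lceil (q-1)\Delta\rceil)^{1/q}\otimes_R M
\end{equation*}
for every $q\ge q_0$. The divisor condition $\Div(c)\ge\Delta$ yields $(q-1)\Div(c)\ge\lceil (q-1)\Delta\rceil$, so multiplication by $c^{q-1}$ sends $R(\lceil (q-1)\Delta\rceil)$ into $R$; taking $q$-th roots then gives an $R$-linear map $R(\lceil (q-1)\Delta\rceil)^{1/q}\to R^{1/q}$ via $y\mapsto c^{(q-1)/q}y$. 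Combined with $\ba R_c = R_c$, this transports the vanishing into the cleaner relation $(c')^{1/q}\otimes z = 0$ in $R_c^{1/q}\otimes_{R_c} M_c$ for $q\ge q_0$, with all multiplier data suppressed.

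The key tool is that $R_c$ is $F$-finite and regular, so by Kunz's theorem $R_c^{1/q}$ is a finitely generated projective $R_c$-module and the inclusion $R_c \hookrightarrow R_c^{1/q}$ admits a splitting $\sigma_q$. Since $R^{1/q}$ is a finitely presented $R$-module, clearing denominators upgrades $\sigma_q$ to an $R$-linear map $\tau_q = c^{N_q}\sigma_q : R^{1/q}\to R$ with $\tau_q(1) = c^{N_q}$. The strategy is to apply $\tau_q \otimes \mathrm{id}_M$ (together with the $c^{q-1}$-multiplication above, absorbed into a further power of $c$) to the vanishing relation, thereby extracting a power of $c$ out of the tensor product and establishing the desired annihilation $(c^n \ba^{\lceil t(q-1)\rceil})^{1/q}\otimes z = 0$.

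The main obstacle is the uniformity of $n$. The witness $c'$ depends on $z$ and its divisorial support need not be contained in $\Supp(\Div c)$, so a naive comparison such as $c^n \in c' R$ fails. The resolution is that the splittings $\sigma_q$ ignore $c'$ entirely and depend only on the $R_c$-module structure of $R^{1/q}$, which is governed by $F$-finiteness of $R$; this bounds $N_q$ independently of $c'$, $z$, and $M$. Once the argument is carried out for $q\ge q_0$, the finitely many exceptional small values $q<q_0$ are absorbed by enlarging $n$ further, each contributing only finitely many relations to be cleared.
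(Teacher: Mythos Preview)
The paper does not supply a proof of this lemma; it simply records the citation to \cite[Lemma~2.17]{Sc}. So your attempt has to stand on its own.

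There is a genuine gap at the central step. You propose to apply $\tau_q\otimes\mathrm{id}_M$ to the relation $(c')^{1/q}\otimes z=0$, but $\tau_q$ maps $R^{1/q}$ to $R$, so the output lands in $R\otimes_R M=M$: what you actually obtain is $\tau_q\bigl((c')^{1/q}\bigr)\cdot z=0$ in $M$, a statement about some uncontrolled element of $R$ annihilating $z$. This is not the desired relation $(c^n)^{1/q}\otimes z=0$ inside $R(\lceil(q-1)\Delta\rceil)^{1/q}\otimes_R M$; the map points the wrong way. The uniformity discussion is also incomplete: you argue that $N_q$ is independent of $c'$, $z$, $M$, but never that it is bounded in $q$, and the clause about ``finitely many exceptional $q<q_0$'' does not help because $q_0$ depends on $z$.

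The argument in \cite{Sc} (going back to Hochster--Huneke in the classical case) repairs both issues simultaneously. One produces a \emph{single} map $\psi\in\Hom_R\bigl(R^{1/p^{e_0}},R\bigr)$, for one fixed $e_0$, with $\psi(c^{1/p^{e_0}})$ equal to a fixed power of $c$; this is where regularity of $R_c$ and the localization $\Hom_R(R^{1/p^{e_0}},R)_c\cong\Hom_{R_c}(R_c^{1/p^{e_0}},R_c)$ are used, and it requires clearing denominators only once. Then for each target $q=p^e$ one applies the $R$-linear map $\psi^{1/q}\colon R^{1/qp^{e_0}}\to R^{1/q}$ (and iterates) to descend a vanishing relation known at a higher Frobenius level to level $q$. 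Because a single $\psi$ is iterated, the exponent of $c$ that emerges is uniform in $q$, in $z$, and in $M$. Your first two paragraphs, which use $\Div(c)\ge\Delta$ and $\ba R_c=R_c$ to strip off the divisor and ideal data, are correct and are exactly how the triple version reduces to the classical one; it is the splitting step that needs to be reorganized.
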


Now we introduce test ideals. There are two kinds of test ideals, finitistic test ideals and big test ideals. 
\begin{defn}[\textup{\cite[Definition-Propositions 3.2 and 3.3]{BSTZ}, cf.~\cite{HH1}}]
Let $(R, \Delta, \ba^t)$ be a triple in characteristic $p>0$ and $E=\bigoplus_{\m}E_R(R/\m)$ be the direct sum, taken over all maximal ideals $\m$ of $R$, of the injective hulls $E_R(R/\m)$ of the residue fields $R/\m$. 
\begin{enumerate}
\item[(i)] The \textit{finitistic test ideal} $\tau_{\rm fg}(R, \Delta, \ba^t)$ associated to $(R, \Delta, \ba^t)$ is defined by 
\[\tau_{\rm fg}(R, \Delta, \ba^t)=\mathrm{Ann}_R(0^{*(\Delta, \ba^t){\rm fg}}_E)=\bigcap_M \mathrm{Ann}_R(0^{*(\Delta, \ba^t)}_M),\] where $M$ runs through all finitely generated $R$-submodules of $E$. 
When $\ba=R$ (resp. $\Delta=0$ and $\ba=R$), we denote it by $\tau_{\rm fg}(R, \Delta)$ (resp. $\tau_{\rm fg}(R)$). 
\item[(ii)] The \textit{big test ideal} $\tau_{\rm b}(R, \Delta, \ba^t)$ associated to $(R, \Delta, \ba^t)$ is defined by 
\[\tau_{\rm b}(R, \Delta, \ba^t)=\mathrm{Ann}_R(0^{*(\Delta, \ba^t)}_E).\] 
When $\ba=R$ (resp. $\Delta=0$ and $\ba=R$), we denote it by $\tau_{\rm b}(R, \Delta)$ (resp. $\tau_{\rm b}(R)$). 
\item[(iii)]
Suppose in addition that $R$ is $F$-finite. 
The ring $R$ is said to be \textit{strongly $F$-regular} if $\tau_{\rm b}(R)=R$. 
\end{enumerate}
\end{defn}

\begin{rem}
The finitistic test ideal was introduced by Hochster-Huneke \cite{HH1} about 30 years ago, but 
the notations used for test ideals vary in the literature. 
The notation $\tau(R)$ was used for the finitistic test ideal $\tau_{\rm fg}(R)$ originally, but nowadays is more often used for the big test ideal $\tau_{\rm b}(R)$, which is sometimes denoted by $\widetilde{\tau}(R)$ in the literature. 
\end{rem}

\begin{lem}[\textup{cf.~\cite[Proposition 8.23 (e)]{HH1}}]\label{finite test completion}
Let $(R, \Delta, \ba^t)$ be a triple where $(R, \m)$ is an $F$-finite local ring of characteristic $p>0$. 
Let $\widehat{R}$ denote the $\m$-adic completion of $R$ and $\widehat{\Delta}$ denote the pullback of $\Delta$ to $\Spec \widehat{R}$. 
Then 
\[\tau_{\rm fg}(R, \Delta, \ba^t)=\tau_{\rm fg}\bigl(\widehat{R}, \widehat{\Delta}, (\ba \widehat{R})^t \bigr) \cap R.\]
\end{lem}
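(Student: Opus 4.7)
The plan is to adapt the classical Hochster--Huneke completion argument (cf.~\cite[Proposition 8.23(e)]{HH1}) to the triple setting. Since $(R, \m)$ is local, the injective hull $E = E_R(R/\m)$ coincides with $E_{\widehat{R}}(\widehat{R}/\m\widehat{R})$ and carries compatible $R$- and $\widehat{R}$-module structures. The crucial preliminary observation is that a submodule $N \subset E$ is finitely generated over $R$ if and only if it is finitely generated over $\widehat{R}$: either way such an $N$ is Artinian and Noetherian, hence of finite length, hence annihilated by some $\m^k$, and $R/\m^k = \widehat{R}/\m^k\widehat{R}$. So it suffices to establish, for each such finite length $N$, the equality of submodules of $N$
\[
0^{*(\Delta, \ba^t)}_N \;=\; 0^{*(\widehat{\Delta}, (\ba \widehat{R})^t)}_N.
\]

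The identification rests on three ingredients. First, $R^\circ \subset \widehat{R}^\circ$: since $R \to \widehat{R}$ is faithfully flat, every minimal prime of $\widehat{R}$ contracts to the unique minimal prime $(0)$ of $R$, so any nonzero element of $R$ lies in no minimal prime of $\widehat{R}$. Second, for any module $M$ with compatible $R$- and $\widehat{R}$-structures and any finite length $N$ annihilated by $\m^k$, the canonical map $M \otimes_R N \to M \otimes_{\widehat{R}} N$ is an isomorphism, because both sides compute $(M/\m^k M) \otimes_{R/\m^k} N$. Third, the identification
\[
R\bigl(\lceil (q-1)\Delta \rceil\bigr)^{1/q} \otimes_R \widehat{R} \;\cong\; \widehat{R}\bigl(\lceil (q-1)\widehat{\Delta} \rceil\bigr)^{1/q}
\]
follows from $F$-finiteness (so that $R^{1/q}$ is a finitely generated $R$-module and therefore $R^{1/q} \otimes_R \widehat{R}$ is its $\m^{1/q}$-adic completion $\widehat{R^{1/q}} = \widehat{R}^{1/q}$) together with the standard fact that completion commutes with reflexive divisor sheaves for excellent normal rings.

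Given these, the containment $0^{*(\Delta, \ba^t)}_N \subset 0^{*(\widehat{\Delta}, (\ba \widehat{R})^t)}_N$ is immediate: any witnessing $c \in R^\circ$ also lies in $\widehat{R}^\circ$, and the tensor product identifications transport the vanishing equations verbatim. The harder inclusion is the main obstacle. The cleanest route is to fix once and for all a big sharp test element $d \in R^\circ$ for $(R, \Delta, \ba^t)$ supplied by Lemma \ref{test element exists}, and verify that the \emph{same} $d$ is also a big sharp test element for $(\widehat{R}, \widehat{\Delta}, (\ba \widehat{R})^t)$. The hypotheses of Lemma \ref{test element exists} --- $R_d$ regular, $\Div(d) \ge \Delta$, and $\ba R_d = R_d$ --- transfer to $\widehat{R}$ because $R \to \widehat{R}$ is flat with regular formal fibres (by excellence), $\widehat{\Delta}$ is by construction the pullback of $\Delta$, and ideal extensions are preserved under flat base change. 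Given $z \in 0^{*(\widehat{\Delta}, (\ba\widehat{R})^t)}_N$ we then have $(d (\ba\widehat{R})^{\lceil t(q-1)\rceil})^{1/q} \otimes z = 0$ in $\widehat{R}(\lceil(q-1)\widehat{\Delta}\rceil)^{1/q} \otimes_{\widehat{R}} N$ for \emph{every} $q$; by the identifications above this reads $(d \ba^{\lceil t(q-1)\rceil})^{1/q} \otimes z = 0$ in $R(\lceil(q-1)\Delta\rceil)^{1/q} \otimes_R N$, so $z \in 0^{*(\Delta, \ba^t)}_N$ with test element $d$.

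Taking the union over all finite length submodules $N \subset E$ then gives $0^{*(\Delta, \ba^t){\rm fg}}_E = 0^{*(\widehat{\Delta}, (\ba \widehat{R})^t){\rm fg}}_E$ as subsets of $E$, and taking annihilators --- using that the $R$- and $\widehat{R}$-actions by an element of $R$ agree --- yields
\[
\tau_{\rm fg}\bigl(\widehat{R}, \widehat{\Delta}, (\ba \widehat{R})^t\bigr) \cap R \;=\; \mathrm{Ann}_R\bigl(0^{*(\widehat{\Delta}, (\ba\widehat{R})^t){\rm fg}}_E\bigr) \;=\; \mathrm{Ann}_R\bigl(0^{*(\Delta, \ba^t){\rm fg}}_E\bigr) \;=\; \tau_{\rm fg}(R, \Delta, \ba^t).
\]
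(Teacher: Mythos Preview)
Your argument is correct and shares the paper's key step: produce, via Lemma~\ref{test element exists}, a single element of $R^{\circ}$ that is simultaneously a big sharp test element for $(R,\Delta,\ba^t)$ and for $(\widehat{R},\widehat{\Delta},(\ba\widehat{R})^t)$, and then use faithful flatness of $R \to \widehat{R}$ together with the identification $R(\lceil (q-1)\Delta\rceil)\otimes_R \widehat{R}\cong \widehat{R}(\lceil (q-1)\widehat{\Delta}\rceil)$. One small point: Lemma~\ref{test element exists} only guarantees that \emph{some power} of the chosen $c$ is a big sharp test element, and the exponents produced for $R$ and for $\widehat{R}$ could in principle differ; this is easily fixed by passing to the larger power, since any $R^{\circ}$-multiple of a big sharp test element is again one.

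The organization, however, differs from the paper's. The paper invokes Hochster's theorem that excellent reduced local rings are approximately Gorenstein to obtain a cofinal system $\{I_t\}$ of $\m$-primary irreducible ideals, expresses each finitistic test ideal as $\bigcap_t (I_t:I_t^{*(\Delta,\ba^t)})$ following \cite[Proposition~8.23(f)]{HH1}, and then shows $I_t^{*(\Delta,\ba^t)}\widehat{R}=(I_t\widehat{R})^{*(\widehat{\Delta},(\ba\widehat{R})^t)}$. You bypass the approximately Gorenstein input altogether by working directly with finite-length submodules $N\subset E=E_R(R/\m)=E_{\widehat{R}}(\widehat{R}/\m\widehat{R})$ and comparing $0^{*}_N$ on the two sides via the tensor identifications. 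Your route is a little more self-contained (no appeal to \cite{Ho}) and makes the module-theoretic content explicit; the paper's route has the advantage of reducing everything to the ideal version of tight closure, where the faithful-flatness argument is a one-liner. Both are perfectly valid.
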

\begin{proof}
Since excellent reduced local rings are approximately Gorenstein by \cite{Ho}, there exists a sequence $\{I_t\}$ of $\m$-primary irreducible ideals in $R$ cofinal with the powers of $\m$.  
Then by the same argument as the proof of \cite[Proposition 8.23 (f)]{HH1}, one has 
\begin{align*}
\tau_{\rm fg}(R, \Delta, \ba^t)&=\bigcap_t (I_t:_R I_t^{*(\Delta, \ba^t)}), \\
\tau_{\rm fg}\bigl(\widehat{R}, \widehat{\Delta}, (\ba \widehat{R})^t \bigr)&=\bigcap_t \bigl(I_t \widehat{R}:_{\widehat{R}}(I_t \widehat{R})^{*(\widehat{\Delta}, (\ba \widehat{R})^t)}\bigr). 
\end{align*}
Thus, it suffices to prove that $I_t^{*(\Delta, \ba^t)} \widehat{R}=(I_t \widehat{R})^{*(\widehat{\Delta}, (\ba \widehat{R})^t)}$. 
Since $(I_t \widehat{R})^{*(\widehat{\Delta}, (\ba \widehat{R})^t)}$ is an $\m \widehat{R}$-primary ideal or the unit ideal of $\widehat{R}$, there exists an ideal $J$ in $R$ such that $J \widehat{R}=(I_t \widehat{R})^{*(\widehat{\Delta}, (\ba \widehat{R})^t)}$. 
It is enough to show that $I_t^{*(\Delta, \ba^t)}=J$, that is, an element $x \in R$ lies in $I_t^{*(\Delta, \ba^t)}$ if and only if $x$ lies in $(I_t \widehat{R})^{*(\widehat{\Delta}, (\ba \widehat{R})^t)}$. 
Let $c \in R^{\circ}$ be a big sharp test element both for $(R, \Delta, \ba^t)$ and for $(\widehat{R}, \widehat{\Delta}, (\ba \widehat{R})^t)$ (Lemma \ref{test element exists} produces such an element).  
By definition, $x \in I_t^{*(\Delta, \ba^t)}$ if and only if $c x^q \ba^{\lceil t (q-1) \rceil}  \subset I^{[q]}R(\lceil (q-1) \Delta \rceil)$ for all $q=p^e$. 
However, since $R \hookrightarrow \widehat{R}$ is faithfully flat, this is equivalent to saying that $c x^q (\ba \widehat{R})^{\lceil t (q-1) \rceil} \subset I^{[q]} \widehat{R}(\lceil (q-1) \widehat{\Delta} \rceil)$ for all $q=p^e$, which implies that $x \in (I_t \widehat{R})^{*(\widehat{\Delta}, (\ba \widehat{R})^t)}$. 
\end{proof}

\begin{rem}\label{finite test rem}
(1) When $\Delta=0$ and $\ba=R$, it follows from \cite[Proposition 8.23 (e)]{HH1} that Lemma \ref{finite test completion} holds for arbitrary excellent reduced local rings of characteristic $p>0$, not necessarily $F$-finite. 

(2) (\cite[Remark 3.6]{BSTZ}, \cite[Proposition 3.2]{HT}) Suppose that we are in the setting of Lemma \ref{finite test completion}. In the case of big test ideals, a similar but stronger statement holds: 
\[\tau_{\rm b}(R, \Delta, \ba^t)\widehat{R}=\tau_{\rm b}\bigl(\widehat{R}, \widehat{\Delta}, (\ba \widehat{R})^t \bigr).\]
\end{rem}

\section{Numerically $\Q$-Cartier divisors}
In this section, we recall the notion of numerically $\Q$-Cartier divisors, introduced in \cite{BdFF} and \cite{BdFFU}, which is a natural generalization of $\Q$-Cartier divisors. 
 
First we note that the negativity lemma holds for normal varieties over any field. 

\begin{lem}[Negativity lemma]\label{negativity lemma}
Let $h:Z \to Y$ be a birational morphism between normal varieties over a field $k$. 
Suppose that $-B$ is an $h$-nef $\Q$-Cartier $\Q$-Weil divisor on $Z$. 
Then $B$ is effective if and only if $h_*B$ is. 
\end{lem}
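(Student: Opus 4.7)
\medskip
\noindent\textbf{Proof plan.} The ``only if'' direction is immediate, since $h_*$ takes effective divisors to effective divisors. For the converse, assume $h_*B$ is effective and write $B = B_e + B_n$, where $B_e$ is supported on $h$-exceptional prime divisors and $B_n$ contains no $h$-exceptional components. Each non-exceptional prime component of $B$ maps birationally onto a prime divisor on $Y$ with the same coefficient, so $h_*B = h_*B_n$; the hypothesis then forces $B_n \ge 0$, and the problem reduces to showing that $B_e \ge 0$.

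Assume for contradiction that some exceptional coefficient of $B$ is strictly negative, and set $N = -\min(B,0)$ componentwise, so that $N$ is a nonzero effective $h$-exceptional divisor with $B + N \ge 0$. The idea is to reduce the question to a surface situation. Pick an irreducible component $V \subseteq \Supp(N)$ for which $\dim h(V)$ is maximal, and iteratively cut $Y$ by $\dim h(V)$ general hyperplane sections through a general point $y$ of $h(V)$; then pull these back to $Z$ and restrict to an analytic or \'etale neighborhood of $y$. Standard Bertini-type results (applied to $\bar{k}$-points and descended, or after the harmless base change to $\bar{k}$) preserve normality, birationality, effectivity, $\Q$-Cartier-ness, and $h$-nefness, and produce a birational morphism $h' \colon Z' \to Y'$ of normal surfaces to which $B$ pulls back as a $\Q$-Cartier divisor whose negative is $h'$-nef and that still carries a strictly negative coefficient on some $h'$-exceptional curve.

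In the surface situation, let $E_1, \dots, E_r$ denote the $h'$-exceptional prime divisors. Mumford's intersection theory for $\Q$-Cartier divisors on normal surfaces shows that the intersection matrix $M = (E_i \cdot E_j)$ is negative definite. Writing the pullback of $B$ as $\sum_i b_i E_i + B'_n$ with $B'_n \ge 0$ containing none of the $E_j$, the condition $(-B) \cdot E_j \ge 0$ yields
\[
-\sum_i b_i (E_i \cdot E_j) \; \ge \; B'_n \cdot E_j \; \ge \; 0
\]
for each $j$, that is, $(-M)\underline{b} \ge 0$ componentwise for $\underline{b}=(b_i)$. Since $-M$ is symmetric positive definite with non-positive off-diagonal entries (distinct prime divisors meet non-negatively on a normal surface), $-M$ is a Stieltjes matrix, and hence $(-M)^{-1}$ has non-negative entries. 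Therefore $\underline{b} = (-M)^{-1}(-M\underline{b}) \ge 0$, contradicting the existence of a negative coefficient.

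The main obstacle is the reduction to the surface case over an arbitrary field: one must verify that iterated general hyperplane cutting preserves normality of $Y$ and the $\Q$-Cartier and $h$-nef hypotheses on $Z$, which in positive characteristic and over imperfect fields requires some care and is typically handled by base changing to $\bar{k}$ and appealing to Bertini-type normality statements. A more direct alternative that bypasses the Stieltjes matrix argument is to produce a single general complete-intersection curve $C$ inside $V$ that is contracted by $h$ and to evaluate $(-B)\cdot C$ directly; this replaces the matrix computation with one numerical inequality but still depends on analogous general-position inputs.
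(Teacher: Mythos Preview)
The paper's proof is two sentences: extend the base field to $\bar{k}$, normalize (to restore normality, which can fail after base change over an imperfect field), and cite \cite[2.3]{Bir}. Your plan instead sketches the classical argument that underlies that citation---reduce to a surface and use negative definiteness of the exceptional intersection matrix---so the two are not different in substance; yours simply unpacks what the paper outsources to Birkar, and both pass through the base change to $\bar{k}$ that you already flag as the main technical point.

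One imprecision worth noting in your reduction step: cutting $Y$ by $\dim h(V)$ hyperplanes produces varieties of dimension $\dim Y - \dim h(V)$, which equals $2$ only when $\dim h(V) = \dim Y - 2$; choosing $V$ with $\dim h(V)$ maximal does not force this, and iterating does not help since after the first round $V$ already maps to a point. The standard reduction instead cuts by $\dim Y - 2$ general very ample divisors on $Y$ through a closed point of $h(V)$, and one must take $Z'$ to be the strict transform of the resulting surface rather than the full preimage, which can acquire higher-dimensional components over the exceptional locus. With that adjustment your Stieltjes-matrix argument on the surface is correct, as is the alternative single-curve computation you mention at the end.
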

\begin{proof} 
After extending the base field and taking the normalization, we can reduce to the case where the base field is algebraically closed. 
The assertion now follows from \cite[2.3]{Bir}. 
\end{proof}

Using regular alterations, we define numerically $\Q$-Cartier divisors. 
Here an \textit{alteration} of a variety $X$ over a field is a generically finite proper surjective morphism $\pi:Y \to X$ and a \textit{regular alteration} of $X$ is an alteration $\rho:Z \to X$ from a regular variety $Z$, which always exists by \cite{dJ}. 

\begin{defn}[\textup{\cite[Definition 15]{dFDTT}, cf.~\cite[Definition 5.2]{BdFFU}}]\label{num def}
Let $X$ be a normal variety over a field. 
\begin{enumerate}
\item[(i)] Suppose that $D$ is a $\Q$-Weil divisor on $X$ and $x \in X$ is a (not necessarily closed) point of $X$.  
We say that $D$ is \textit{numerically $\Q$-Cartier} at $x$ if there exist a open neighborhood $U$ of $x$, a regular alteration $\pi:V \to U$, and a $\pi$-numerically trivial $\Q$-divisor $D_V$ on $V$ such that $D|_U=\pi_*D_V$.
We also say that $D$ is numerically $\Q$-Cartier if $D$ is numerically $\Q$-Cartier at all points of $X$. 
\item[(ii)] 
$X$ is said to be \textit{numerically $\Q$-Gorenstein} if $K_X$ is numerically $\Q$-Cartier.  
$X$ is said to be \textit{numerically $\Q$-factorial} if every $\Q$-Weil divisor on it is numerically $\Q$-Cartier. 
\end{enumerate}
\end{defn}

\begin{rem}\label{num Cartier rem}
Definition \ref{num def} (i) is equivalent to saying that there exists a open neighborhood $U$ of $x$, a regular alteration $\pi:V \to U$, and a $\Q$-divisor $D'_V$ on $V$ such that $D'_V$ is $f$-numerically trivial and $f_*D'_V=g^*D|_U$, where $\pi:V \xrightarrow{f} W \xrightarrow{g} U$ is the stein factorization of $\pi$. 
The uniqueness of $D'_V$ can be verified by applying Lemma \ref{negativity lemma} to $f$. 
Therefore, we denote $D'_V$ by $\pi_{\rm num}^*D|_U$ and call it the numerical pullback of $D|_U$ to $V$.
It follows from essentially the same argument as in \cite[Proposition 5.3]{BdFFU} that the numerical pullback of $D|_U$ can be defined for an arbitrary regular alteration of $U$.  
\end{rem}

\begin{eg}\label{eg num Cartier}
(1) $\Q$-Cartier $\Q$-Weil divisors are clearly numerically $\Q$-Cartier. 

(2) Every two-dimensional excellent normal integral scheme is numerically $\Q$-factorial, because Mumford's pullback for $\Q$-Weil divisors exists by the Hodge index theorem \cite[Theorem 10.1]{Ko}. 
\end{eg}

The following lemma tells us that the condition of $D$ being numerically $\Q$-Cartier is somewhat orthogonal to the finite generation of the section ring $\bigoplus_{i \ge 0}\sO_X(\lfloor iD \rfloor)$. 

\begin{lem}\label{Q-Cartier}
Let $x \in X$ be a point of a normal variety $X$ over a field and $D$ be a $\Q$-Weil divisor on $X$. 
Suppose that $\mathcal{R}=\bigoplus_{i \ge 0}\sO_X(\lfloor iD \rfloor)$ is Noetherian. 
Then $D$ is numerically $\Q$-Cartier at $x$ if and only if it is $\Q$-Cartier at $x$. 
\end{lem}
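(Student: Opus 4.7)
The ``if'' direction is immediate from Example~\ref{eg num Cartier}(1), so my focus is on the converse. The plan is to convert the hypotheses into a geometric situation where the negativity lemma forces the section-ring modification of $X$ to be an isomorphism near $x$. After shrinking $X$ to a neighborhood of $x$, I would pick a regular alteration $\pi:V\to X$ and a $\pi$-numerically trivial $\Q$-divisor $D_V$ on $V$ with $\pi_*D_V=D$. Using the Noetherian hypothesis on $\mathcal{R}$, I would then choose $m>0$ with $mD$ integral and $\mathcal{R}^{(m)}:=\bigoplus_{i\ge 0}\sO_X(imD)$ generated in degree one over $\sO_X$, and set $g:Y:=\Proj_X\mathcal{R}^{(m)}\to X$. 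Then $Y$ is normal, $g$ is projective birational, and there is a $g$-ample Cartier divisor $H$ on $Y$ satisfying $g_*\sO_Y(iH)=\sO_X(imD)$ and $g_*H=mD$. The problem reduces to showing that $g$ is an isomorphism near $x$: then $H$ descends to a Cartier representative of $mD$ and $D$ becomes $\Q$-Cartier at $x$. By base-changing the entire setup along the Stein factorization of $\pi$ (under which the section ring remains Noetherian), I would further reduce to the case where $\pi$ itself is birational.

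Next I would take a common resolution or regular alteration $W$ of $V\times_X Y$ with birational morphisms $\phi:W\to V$ and $\psi:W\to Y$ satisfying $\pi\circ\phi=g\circ\psi$, and introduce on $W$ the $\Q$-Cartier divisor $F:=\psi^*H-m\phi^*D_V$. Three properties would then be checked: (i)~$F$ is $\phi$-nef, since any $\phi$-contracted curve $C$ has $\phi^*D_V\cdot C=0$, while $\psi^*H\cdot C\ge 0$ because $\psi(C)$ lies in a fiber of $g$ and $H$ is $g$-ample; (ii)~$(\pi\phi)_*F=g_*H-m\pi_*D_V=0$, so $F$ is $(\pi\phi)$-exceptional; (iii)~by projection formula, $\phi_*F$ on $V$ is $\pi$-nef (since $\psi^*H\cdot\phi^*E\ge 0$ for $\pi$-contracted curves $E$, as their lifts are $(\pi\phi)$-contracted) and $\pi$-exceptional ($\pi_*\phi_*F=(\pi\phi)_*F=0$).

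Applying the negativity lemma (Lemma~\ref{negativity lemma}) to the birational morphism $\pi:V\to X$ would then force $\phi_*F\le 0$ on $V$. Decomposing $F=\phi^*\phi_*F+E$ with $E$ $\phi$-exceptional, the divisor $E$ inherits $\phi$-nefness from $F$ (since $\phi^*\phi_*F$ is $\phi$-numerically trivial), so a second application of negativity to $\phi:W\to V$ yields $E\le 0$. Combined with $\phi^*\phi_*F\le 0$, this would give $F\le 0$ as a Weil divisor on $W$.

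To conclude, I would argue by contradiction: if $g$ failed to be an isomorphism at $x$, then $g^{-1}(x)$ would contain a curve and, by $g$-ampleness of $H$, a suitable component $C$ with $H\cdot C>0$ and $C\not\subset\Supp(H)$; lifting to a component $C'\subset\psi^{-1}(C)$ chosen generically to avoid $\Supp(F)$ would yield $F\cdot C'=\psi^*H\cdot C'>0$, contradicting $F\le 0$. The hard part I expect is the careful projection-formula bookkeeping in~(iii) (especially when $\pi$-contracted curves lie in the image of the $\phi$-exceptional locus) and the genericity argument for $C'$ to avoid $\Supp(F)$, which may fail for low-dimensional fibers and necessitate either enlarging $W$ to create more room or exploiting that $F$ being simultaneously $\phi$-nef and $\le 0$ forces $\phi$-numerical triviality off $\Supp(F)$; the reduction to $\pi$ birational via Stein factorization, while conceptually standard, also requires verifying the compatibility of numerical pullbacks with finite base change.
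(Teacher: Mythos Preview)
Your overall strategy—show that $g:\Proj\mathcal{R}^{(m)}\to X$ is an isomorphism near $x$ by exhibiting a fibral curve on which the tautological ample divisor is positive, contradicting numerical triviality—matches the paper's. But your execution takes a detour that creates the two difficulties you yourself flag, and neither is actually resolved in your sketch.

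Step~(iii) is a genuine gap: pushforward does not preserve relative nefness, and your projection-formula heuristic does not control the $\phi$-exceptional discrepancy between $\phi^*\phi_*\psi^*H$ and $\psi^*H$. (You can bypass~(iii) by noting that $F$ is already $(\pi\phi)$-nef—any $(\pi\phi)$-contracted curve has $\phi^*D_V$-degree zero and $\psi^*H$-degree $\ge 0$—so one application of negativity to $\pi\phi$ gives $F\le 0$ directly.) The final contradiction is also incomplete: ``$F\le 0$ and $F\cdot C'>0$'' is \emph{not} a contradiction unless $C'\not\subset\Supp F$ (think $F=-E$ with $E^2<0$ and $C'=E$), and there is no reason the strict transform of $C$ avoids $\Supp(\phi^*D_V)$, which you cannot move. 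A correct fix is to observe that $g$ is \emph{small}, hence $\psi_*F=0$, and that $F$ is actually $\psi$-numerically trivial (for $\psi$-contracted $\gamma$ both $\psi^*H\cdot\gamma$ and $\phi^*D_V\cdot\gamma=D_V\cdot\phi_*\gamma$ vanish); negativity applied to $\psi$ in both directions then forces $F=0$, and the contradiction is immediate.

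The paper's proof avoids all of this by a single choice: rather than fixing an arbitrary alteration $V\to X$ and building a common roof $W$, it takes a regular alteration $\mu:Y\to Z$ of $Z=\Proj\mathcal{R}$ itself, so that $\pi=\rho\mu$ is a regular alteration of $X$. Because $\rho$ is small, $\mu_*\pi^*_{\rm num}D=(\deg\mu)\,\rho^{-1}_*D$, and the \emph{uniqueness} of the numerical pullback (Remark~\ref{num Cartier rem}) forces $\pi^*_{\rm num}D=\mu^*\rho^{-1}_*D$. If $\rho$ is not an isomorphism, a $\rho$-exceptional curve $C$ gives
\[
\pi^*_{\rm num}D\cdot\mu^{-1}_*C=\mu^*\rho^{-1}_*D\cdot\mu^{-1}_*C=\rho^{-1}_*D\cdot C>0,
\]
contradicting $\pi$-triviality. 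No common resolution, no double negativity, no Stein-factorization reduction, no genericity argument. Your ``$F=0$'' fix above is exactly this uniqueness statement in disguise, which is why factoring the alteration through $Z$ from the outset is the cleaner route.
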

\begin{proof}
Suppose that $D$ is numerically $\Q$-Cartier at $x$. 
After shrinking $X$ if necessary, we can define the numerical pullback of $D$ to an arbitrary regular alteration of $X$. 
Let $\rho:Z=\Proj \mathcal{R} \to X$ be the structure morphism. Then $\rho$ is a small morphism and $\rho^{-1}_*D$ is a $\rho$-ample $\Q$-Cartier $\Q$-Weil divisor on $Z$. 
We will show that $\rho$ is an isomorphism. 

Take a regular alteration $\mu: Y \to Z$, and then the composite morphism $\pi:Y \xrightarrow{\mu} Z \xrightarrow{\rho} X$ is a regular alteration of $X$. 
Since $\mu_* \pi^*_{\rm num}D=(\deg \mu) \rho^{-1}_*D$, it follows from the uniqueness of the numerical pullback of $\rho^{-1}_*D$ (see Remark \ref{num Cartier rem}) that $\pi^*_{\rm num}D= \mu^*\rho^{-1}_*D$. 
Suppose to the contrary that $\rho$ is not an isomorphism, which implies that there exists a $\rho$-exceptional curve $C$ on $Z$. 
Then 
\[(\pi^*_{\rm num}D, \mu^{-1}_*C)=\left(\mu^*\rho^{-1}_*D,  \mu^{-1}_*C\right)=(\rho^{-1}_*D,  C)>0,\]
because $\rho^{-1}_*D$ is $\rho$-ample. 
This, however, contradicts the fact that $\pi^*_{\rm num}D$ is numerically $\pi$-trivial. 
\end{proof}

\begin{eg}[\textup{cf.~\cite[Example 5.8]{BdFFU}}]
Let $x \in X$ be a closed point of a three-dimensional strongly $F$-regular (affine) variety over an algebraically closed field of characteristic $p > 5$ and let $D$ be a $\Q$-Weil divisor on $X$.  
It follows from \cite[Theorem 4.3]{SS} and \cite[Theorem 3.3]{HW} that there exists an effective $\Q$-Weil divisor $\Delta$ on $X$ such that $K_X+\Delta$ is $\Q$-Cartier and $(X, \Delta)$ is klt.
Then $\bigoplus_{i \ge 0}\sO_X(\lfloor iD \rfloor)$ is Noetherian by \cite[Lemma 2.27]{CEMS}. 
Applying Lemma \ref{Q-Cartier}, we see that $D$ is numerically $\Q$-Cartier at $x$ if and only if it is $\Q$-Cartier at $x$. 
\end{eg}

We will use the following fact in the proof of the main theorem. 

\begin{lem}[\textup{\cite[Proposition 5.9]{BdFFU}}]\label{valuation}
Let $z \in Z$ be a point of a normal variety $Z$ over a field and let $B$ be a Weil divisor on $Z$ which is numerically $\Q$-Cartier at $z$. 
Then for every divisorial valuation $\nu$ on $Z$ centered at $z$, we have 
\[\lim_{m \to \infty}\frac{\nu\bigl(\sO_Z(mB)\sO_Z(-mB)\bigr)}{m}=0, \]
where $\nu(\sO_Z(mB)\sO_Z(-mB))=\min\{\nu(f) \, | \, f \in \sO_Z(mB)_z\sO_Z(-mB)_z \subset \sO_{Z, z}\}$. 
\end{lem}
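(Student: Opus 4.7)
The question is local at $z$, so after shrinking $Z$ I choose a regular alteration $\pi\colon V\to Z$ with Stein factorization $\pi=g\circ f\colon V\xrightarrow{f}W\xrightarrow{g}Z$ (with $f$ birational and $g$ finite of degree $d$), and let $B'_V=\pi^{*}_{\rm num}B$ denote the numerical pullback as in Remark~\ref{num Cartier rem}. By construction, $B'_V$ is $\Q$-Cartier on the regular variety $V$, is $f$-numerically trivial, and satisfies $f_*B'_V=g^*B$, and hence $\pi_*B'_V=dB$. Fix an extension $\tilde\nu$ of $\nu$ to a divisorial valuation on $V$ with ramification index $e$, and let $r\in\N$ be such that $rB'_V$ is integral.

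The easy direction---the lower bound---follows from the negativity lemma. For every $m\in r\N$ and every $h\in\sO_Z(mB)_z$, the $\Q$-divisor $\Div_V(h)+mB'_V$ on $V$ is $f$-numerically trivial and its $f$-pushforward is $g^*(\Div_Z(h)+mB)\ge 0$; Lemma~\ref{negativity lemma} then forces $\Div_V(h)+mB'_V\ge 0$, so $h\in\sO_V(mB'_V)$. Running the same argument with $-m$ gives the fractional-ideal containments (near the center of $\tilde\nu$)
\[\sO_Z(\pm mB)\cdot\sO_V\ \subseteq\ \sO_V(\pm mB'_V),\]
and multiplying them recovers the trivial bound $\nu(\sO_Z(mB)\sO_Z(-mB))\ge 0$.

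For the upper bound, the plan is to construct, for every $\epsilon>0$ and every sufficiently large $m\in r\N$, rational sections $\alpha_m,\beta_m\in K(V)$ of $\sO_V(mB'_V)$ and $\sO_V(-mB'_V)$ respectively that are defined on a neighbourhood of $\pi^{-1}(z)$ and satisfy $\tilde\nu_i(\alpha_m)+\tilde\nu_i(\beta_m)\le\epsilon m$ for each of the (finitely many) extensions $\tilde\nu_1,\dots,\tilde\nu_s$ of $\nu$ to $K(V)$. The negativity-lemma argument above, now applied to $\pi$ (and using $\pi_*B'_V=dB$), then shows that $\textup{Nm}(\alpha_m)\in\sO_Z(dmB)_z$ and $\textup{Nm}(\beta_m)\in\sO_Z(-dmB)_z$; the norm-valuation formula $\nu(\textup{Nm}(\,\cdot\,))=\sum_i f_i\tilde\nu_i(\,\cdot\,)$ then gives $\nu(\textup{Nm}(\alpha_m)\textup{Nm}(\beta_m))\le C\epsilon m$ for $C=\sum_i f_i$, whence $\nu(\sO_Z(dmB)\sO_Z(-dmB))\le C\epsilon m$. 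The subadditivity of $m\mapsto\nu(\sO_Z(mB)\sO_Z(-mB))$ (coming from $\sO_Z(mB)\sO_Z(m'B)\subseteq\sO_Z((m+m')B)$) upgrades this bound to all $m$, and letting $\epsilon\to 0$ yields the asserted limit.

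The main obstacle is the construction of the sections $\alpha_m$ and $\beta_m$. A local generator of $\sO_V(\pm mB'_V)$ at a single center controls only one of the valuations $\tilde\nu_i$, and it need not extend to a regular section on a neighbourhood of the whole fibre $\pi^{-1}(z)$: indeed, $\sO_V(mB'_V)$ is typically not trivial on any such neighbourhood, so one cannot simply take a nowhere-vanishing section. The heart of the argument is therefore to exploit the $f$-numerical triviality of $B'_V$, via an asymptotic Fujita-type approximation for the relative line bundles $\sO_V(mB'_V)$, in order to produce sections whose $\tilde\nu_i$-values approximate the expected minima $\mp\tilde\nu_i(mB'_V)$ to sub-linear precision, simultaneously for every $i$.
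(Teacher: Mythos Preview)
The paper's own proof is a one-line reduction to \cite[Proposition~5.9]{BdFFU} after shrinking $Z$; it supplies no details, so your plan is already more explicit than what appears in the text. Your overall architecture---pass to a regular alteration, use Lemma~\ref{negativity lemma} for the trivial inequality, then manufacture sections on $V$ and push them down via the norm---is sound and is a reasonable way to transport the BdFFU argument (written for resolutions) to the alteration setting.

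Two comments. First, a small slip: you do not need the negativity lemma to show $\textup{Nm}(\alpha_m)\in\sO_Z(dmB)_z$. That direction is just the fact that proper pushforward preserves effectivity: $\Div_V(\alpha_m)+mB'_V\ge 0$ gives $\Div_Z(\textup{Nm}(\alpha_m))+dmB=\pi_*(\Div_V(\alpha_m)+mB'_V)\ge 0$. Negativity was only needed for the opposite containment $\sO_Z(\pm mB)\cdot\sO_V\subseteq\sO_V(\pm mB'_V)$.

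Second, and more substantively: your appeal to an ``asymptotic Fujita-type approximation'' is the crux, and as written it is both vague and mislabelled. Fujita's \emph{approximation} theorem concerns big divisors and says nothing about the $\pi$-numerically trivial $B'_V$; indeed a numerically trivial line bundle can have no sections at all, so a naive base-locus argument fails. What actually closes the gap---and is essentially how the argument in \cite{BdFFU} runs---is Fujita's \emph{vanishing} theorem (valid in every characteristic, by Keeler) together with relative Castelnuovo--Mumford regularity: fix a $\pi$-ample divisor $A$ on $V$; then there is $n_0$, independent of $m$, such that both $\sO_V(mB'_V+n_0A)$ and $\sO_V(-mB'_V+n_0A)$ are $\pi$-globally generated for every $m\in r\Z$. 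A general section then avoids the finitely many centers of the $\tilde\nu_i$ simultaneously, giving $\tilde\nu_i(\alpha_m)+\tilde\nu_i(\beta_m)=-2n_0\,\tilde\nu_i(A)=O(1)$; after multiplying the norms by a fixed element of $\sO_{Z,z}$ absorbing $n_0\pi_*A$, you obtain $\nu\bigl(\sO_Z(dmB)\sO_Z(-dmB)\bigr)=O(1)$, which is stronger than the $o(m)$ you were aiming for. Once you replace the hand-wave by this input, your plan becomes a complete proof.
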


\begin{proof}
After shrinking $Z$ if necessary, we may assume that $B$ is a numerically $\Q$-Cartier Weil divisor on $Z$. 
The assertion then follows from essentially the same argument as in \cite[Proposition 5.9]{BdFFU}. 
\end{proof}

\section{Main Theorem}
In this section, we will prove the main theorem. 
The following valuative characterization of tight closure plays an important role in the proof. 
\begin{prop}[\textup{cf.~\cite[Theorem 3.1]{HH}}]\label{valuative criterion}
Let $(R, \Delta, \ba^t)$ be a triple where $(R, \m)$ is a $F$-finite complete local ring of characteristic $p>0$. 
Suppose that $S$ is a normal domain which is a module-finite extension of $R$ and $\rho^*\Delta$ is an integral divisor on $Z=\Spec S$, where  $\rho: Z \to X$ is the morphism induced by the inclusion $R \hookrightarrow S$. 
Let $u$ be an element of $R$ and $I$ be an ideal of $R$, and fix a $\Q$-valued valuation $\nu$ on $S$ which is nonnegative on $R$ and positive on $\m$.  
Then $u \in I^{*(\Delta, \ba^t)}$ if and only if there exists a sequence of nonzero elements $c_e \in S$ satisfying the following three conditions: 
\begin{enumerate}
\item[\textup{(i)}] $\{\nu(c_{e})/p^{e}\}$ is a monotonically decreasing sequence, 
\item[\textup{(ii)}] $\lim_{e \to \infty} \nu(c_e)/p^e=0$, 
\item[\textup{(iii)}] $c_e u^{p^e} \ba^{\lceil tp^e \rceil} S \subset I^{[p^e]}S(p^e \rho^*\Delta)$ for all $e$.
\end{enumerate}
\end{prop}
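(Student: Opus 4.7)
Assume $u \in I^{*(\Delta, \ba^t)}$, and let $c \in R^{\circ}$ be a big sharp test element for $(R, \Delta, \ba^t)$ provided by Lemma~\ref{test element exists}. Definition~\ref{tight closure defn}(iii) applied to the cyclic module $R/I$ yields $c u^{p^e} \ba^{\lceil t(p^e - 1)\rceil} \subset I^{[p^e]} R(\lceil(p^e - 1)\Delta\rceil)$ for every $e$. To match condition (iii) of the proposition, one must absorb the divisorial discrepancy $B_e := \rho^*\lceil(p^e-1)\Delta\rceil - p^e \rho^*\Delta$ on $Z$; a coefficient-by-coefficient check, using that each coefficient of $\lceil(p^e-1)\Delta\rceil - p^e\Delta$ lies in $[-d_i, 1 - d_i)$ for $d_i$ a coefficient of $\Delta$, shows that $B_e$ is dominated componentwise by a fixed effective divisor $F$ on $Z$ independent of $e$. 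Choose a nonzero $f \in S$ with $\mathrm{div}_Z(f) \ge F$, and set $c_e := cf$. Then $\nu(c_e)/p^e = (\nu(c) + \nu(f))/p^e \searrow 0$ gives (i), (ii), and, using $\ba^{\lceil tp^e\rceil} \subset \ba^{\lceil t(p^e-1)\rceil}$,
\[ c_e u^{p^e} \ba^{\lceil tp^e \rceil} S \subset f \cdot I^{[p^e]} R(\lceil(p^e-1)\Delta\rceil) S \subset I^{[p^e]} S(\rho^*\lceil(p^e-1)\Delta\rceil - \mathrm{div}_Z(f)) \subset I^{[p^e]} S(p^e \rho^*\Delta) \]
gives (iii).

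\textbf{Backward direction, descent.} Suppose $\{c_e\}$ satisfies (i)--(iii). The plan has two stages: push the sequence down from $S$ to $R$ through an $R$-linear map, then extract a single tight closure witness. For the descent, note that module-finiteness of $\rho$ together with the integrality of $\rho^*\Delta$ make $\Hom_R(S(p^e \rho^*\Delta), R(\lceil(p^e-1)\Delta\rceil))$ a nonzero $R$-module (via Grothendieck duality for the finite morphism $\rho$ and sections of $\omega_{Z/X}$). Fixing a nonzero $\phi$ in this module and choosing $s_e \in S$ with $d_e := \phi(c_e s_e) \in R^\circ$, applying $\phi$ to (iii) and using $R$-linearity yields $d_e u^{p^e} \ba^{\lceil tp^e \rceil} \subset I^{[p^e]} R(\lceil(p^e-1)\Delta\rceil)$. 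Selecting each $s_e$ from a fixed finite $R$-generating set of $S$ keeps $\nu(s_e)$ bounded, so $\nu(d_e)/p^e \to 0$ inherits from $\{c_e\}$.

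\textbf{Backward direction, uniformization.} The final step is to produce a single $c \in R^\circ$ lying in $\bigcap_{e \ge e_0} J_e$ for some $e_0$, where $J_e := (I^{[p^e]} R(\lceil(p^e-1)\Delta\rceil) :_R u^{p^e} \ba^{\lceil t(p^e-1)\rceil})$; this is where I expect the main obstacle. We know each $J_e$ contains the nonzero element $d_e$ and that $\nu(d_e)/p^e \to 0$. The positivity of $\nu$ on $\m$ prevents the $d_e$ from being swallowed by arbitrarily high powers of $\m$ relative to $p^e$; combined with completeness of $R$, Noetherian-ness, and the monotonicity in (i), this should permit an inverse-limit argument over the truncations $R/\m^N$ to identify a common nonzero element of $\bigcap_{e \ge e_0} J_e$, in the spirit of \cite[Theorem 3.1]{HH}. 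A final multiplication by a fixed nonzero element of $\ba^{\lceil t\rceil}$ then passes from the $\ba^{\lceil tp^e\rceil}$ version to the $\ba^{\lceil t(p^e-1)\rceil}$ version demanded by Definition~\ref{tight closure defn}(i), yielding the desired tight closure witness.
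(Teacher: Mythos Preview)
Your forward direction is fine; the paper does not even write it out. The backward direction, however, has two genuine gaps, and the paper's argument closes them by a substantively different mechanism.

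\textbf{Descent.} You set $d_e=\phi(c_es_e)$ for some $R$-linear map $\phi\in\Hom_R\bigl(S(p^e\rho^*\Delta),R(\lceil(p^e-1)\Delta\rceil)\bigr)$ (which necessarily varies with $e$) and then assert that $\nu(d_e)/p^e\to 0$ ``inherits from $\{c_e\}$''. But an arbitrary $R$-linear map has no interaction with the valuation: there is no bound on $\nu(\phi(x))$ in terms of $\nu(x)$, so the valuative smallness of $c_es_e$ does not transfer to $d_e$. The paper circumvents this by first taking $q'$-th roots. For each $q=p^e$ it chooses $q'=p^{e'}$ so large that $\nu(c_{ee'}^{1/q'})<\lambda$, extracts the $q'$-th root of condition (iii) at level $ee'$ to land in $S^{1/q'}\subset R^{+}$, and then invokes \cite[Theorem~3.3]{HH}: this supplies an $R$-linear map $R^{+}\to R$ sending $c_{ee'}^{1/q'}$ to an element $\widetilde{c}_q\notin\m^r$ for a \emph{fixed} integer $r$ independent of $q$. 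Thus your uncontrolled asymptotic ``$\nu(d_e)/p^e\to 0$'' is replaced by the uniform statement $\widetilde{c}_q\notin\m^r$, which is what Chevalley actually needs.

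\textbf{Uniformization.} Even granting your $d_e$, the colon ideals $J_e=\bigl(I^{[p^e]}R(\lceil(p^e-1)\Delta\rceil):_R u^{p^e}\ba^{\lceil t(p^e-1)\rceil}\bigr)$ are not a priori a decreasing chain, so neither Chevalley's theorem nor an inverse-limit argument over $R/\m^N$ produces a nonzero element of their intersection; knowing only that each $J_e$ meets $R\setminus\m^{N(e)}$ with $N(e)$ growing is not enough. The paper repairs this by passing to the \emph{tight-closure} colon $J_q=\bigl((I^{[q]})^{*\lceil q\Delta\rceil}:_R u^q\ba^{\lceil tq\rceil}\bigr)$ and checking directly that $J_{pq}\subset J_q$. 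Combined with $\widetilde{c}_q\in J_q\setminus\m^r$, Chevalley's theorem now forces $\bigcap_q J_q\neq(0)$, and a $\Delta$-test element strips off the auxiliary tight closure at the end.
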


\begin{proof}
Suppose that there exists a sequence of nonzero elements $c_e \in S$ satisfying the conditions (i)-(iii). 
First note that $\nu$ can be extended to a $\Q$-valued valuation on the absolute integral closure $R^+$ of $R$ (we use the same letter $\nu$ to denote this valuation on $R^+$). 
It follows from \cite[Theorem 3.3]{HH} that there exist a real number $\lambda>0$ and an integer $r>0$ such that for every element $s \in R^+$ with $\nu(s)<\lambda$, there is an $R$-linear map $\phi:R^+ \to R$ such that $\phi(s) \notin \m^r$. 
Fix any power $q=p^{e}$ of $p$, and choose $q'=p^{e'}$ so large that $\nu(c_{ee'}^{1/q'})=\nu(c_{ee'})/q' \le \nu(c_{e'})q/q'<\lambda$, where the second inequality follows from (i) and the third one does from (ii). 
We have $c_{ee'} u^{qq'} \ba^{\lceil tqq' \rceil}S \subset I^{[qq']} S(qq' \rho^*\Delta)$ by (iii) and taking the $q'$-th root on both sides yields that 
\begin{equation}
c_{ee'}^{1/q'} u^{q} \ba^{\lceil tq \rceil}S^{1/q'} \subset I^{[q]} S(qq' \rho^*\Delta)^{1/q'}.\tag{$\clubsuit$}
\end{equation}
By the choice of $\lambda$ and $r$, there exists an $R$-linear map $\widetilde{\phi}:S^{1/q'} \to R$ such that $\widetilde{\phi}(c_{ee'}^{1/q'}) \notin \m^r$, which can be extended to an $R$-linear map $S(qq' \rho^* \Delta )^{1/q'} \to R(\lceil q \Delta \rceil)$ (we use the same letter $\widetilde{\phi}$ to denote this map). 
Setting $\widetilde{c_q}=\widetilde{\phi}(c_{ee'}^{1/q'}) \in R \setminus \m^r$ and applying $\widetilde{\phi}$ to $(\clubsuit)$, we have 
$\widetilde{c_q}u^{q} \ba^{\lceil tq \rceil}  \subset I^{[q]} R(\lceil q \Delta \rceil)$ 
for all $q=p^e$. 

Set $J_q=((I^{[q]})^{*\lceil q \Delta \rceil}:_R u^{q} \ba^{\lceil tq \rceil})$ for each $q=p^e$. 
Note that $\widetilde{c_q} \in J_q$, because $I^{[q]}R(\lceil q \Delta \rceil) \cap R$ is contained in $(I^{[q]})^{*\lceil q \Delta \rceil}$. 
Then $\{J_q\}$ is a decreasing sequence of ideals of $R$. 
Indeed, if $x u^{pq } \ba^{\lceil t pq \rceil} \subset (I^{[pq]})^{*\lceil pq \Delta \rceil}$ with $x \in R$, 
then there exists a nonzero element $b \in R$ such that 
\[b (xu^{q})^{pQ}(\ba^{\lceil tq \rceil})^{[pQ]}  \subset b (xu^{pq})^Q(\ba^{\lceil tpq \rceil})^{[Q]} \subset I^{[pq Q]} R(Q \lceil pq \Delta \rceil) \subset I^{[pq Q]} R(pQ \lceil q \Delta \rceil)\] for all large powers $Q$ of $p$, 
which implies that $ x u^{q}\ba^{\lceil tq \rceil} \subset (I^{[q]})^{*\lceil q \Delta \rceil}$. 
Since the sequence $\{J_q\}$ is decreasing, if it had intersection (0), then Chevalley's theorem would force $J_q \subset \m^r$ for sufficiently large $q$. 
This contradicts the fact that $\widetilde{c_q} \in J_q \setminus \m^r$, so we can choose a nonzero element $d \in \bigcap_{q}J_q$, that is, $d u^q \ba^{\lceil tq \rceil}  \subset (I^{[q]})^{*\lceil q \Delta \rceil}$ for all $q=p^e$. 
Let $c \in R^{\circ}$ be a $\Delta$-test element in the sense of \cite{Ta} (see \cite[Definition 2.4]{Ta} for its definition and \cite[Lemma 2.5]{Ta} for the existence of such an element). 
It follows from the definition of $\Delta$-test elements that $c (I^{[q]})^{*\lceil q \Delta \rceil} \subset I^{[q]}R(\lceil q \Delta \rceil)$, so that $cd  u^{q} \ba^{\lceil tq \rceil} \subset I^{[q]}R(\lceil q \Delta \rceil)$ for all $q=p^e$. Therefore, $u \in I^{*(\Delta, \ba^t)}$ as required. 
\end{proof}

In order to prove the main theorem, we employ the strategy of MacCrimmon \cite{Mc} and Williams \cite{Wi}.  The following notation is due to \cite{Wi}. 

\begin{notation}
Let $R$ be a Noetherian local ring and $\mathbf{x}=x_1, \dots, x_n$ be a system of parameters for $R$. 
For an integer $t \ge 1$, we write $\mathbf{x}^t=x_1^t, \dots, x_n^t$ and denote the ideal $(x_1^t, \dots, x_n^t)$ by $(\mathbf{x}^t)$.  
For every $R$-module $M$, we also denote by $\mathcal{K}(\mathbf{x}, t, M)$ the kernel of the map $M/(\mathbf{x})M \xrightarrow{\times (x_1 \cdots x_n)^{t-1}}M/(\mathbf{x}^t)M$ induced by the multiplication by the element $(x_1 \cdots x_n)^{t-1}$, and we write 
\[
\mathcal{K}(\mathbf{x}, \infty, M):=\bigcup_{t \ge 1} \mathcal{K}(\mathbf{x}, t, M).
\]
\end{notation}

\begin{lem}[\textup{cf.~\cite{Wi}}]\label{key prop}
Let $(R, \Delta, \ba^t)$ be a triple where $(R, \m)$ is an $n$-dimensional $F$-finite complete local ring of characteristic $p>0$. 
Suppose that $(S,\n)$ is a normal local ring which is a module-finite extension of $R$ and $\rho^*\Delta$ is an integral divisor on $Z=\Spec S$, where $\rho: Z \to X$ is the morphism induced by the inclusion $R \hookrightarrow S$. 
Let $\mathbf{x}=x_1, \dots, x_n$ be a system of parameters for $R$ and $J$ be a divisorial ideal of $R$, and fix a $\Q$-valued valuation $\nu$ on $S$ which is nonnegative on $R$ and  positive on $\m$.  
Then $0^{*(\Delta, \ba^t)}_{H^n_{\m}(J)}=0^{*(\Delta, \ba^t){\rm fg}}_{H^n_{\m}(J)}$ if there exists a sequence of nonzero elements $c_e \in S$ satisfying the following three conditions: 
\begin{enumerate}
\item [\textup{(i)}] $\{\nu(c_{e})/p^{e}\}$ is a monotonically decreasing sequence, 
\item[\textup{(ii)}] $\lim_{e \to \infty} \nu(c_e)/p^e=0$, 
\item[\textup{(iii)}] there exists an integer $t_0 \ge 2 $ such that for all $s \ge 1$ and all $q=p^e$, 
\[
c_e \mathcal{K}\bigl(\mathbf{x}^{qs}, \infty, J^{[q]}S(q \rho^*\Delta)\bigr) \subset \mathcal{K}\bigl(\mathbf{x}^{qs}, t_0, J^{[q]}S(q \rho^*\Delta)\bigr).
\]
\end{enumerate}
\end{lem}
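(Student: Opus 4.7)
The plan is to translate the tight-closure condition on $\eta \in 0^{*(\Delta,\ba^t)}_{H^n_\m(J)}$ into the language of the $\mathcal{K}$-kernels, use hypothesis (iii) to replace the $\infty$-level kernel by the $t_0$-level kernel uniformly in $q$, and then invoke the strategy of Proposition \ref{valuative criterion} together with Chevalley's theorem to absorb the varying factor $c_e$ into a single test-element-like constant. This uniform $t_0$-level witness is precisely what places $\eta$ in the tight closure of $0$ inside a fixed finitely generated submodule of $H^n_\m(J)$.

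First I would fix $\eta \in 0^{*(\Delta,\ba^t)}_{H^n_\m(J)}$ and, via the presentation $H^n_\m(J) = \varinjlim_\sigma J/(\mathbf{x}^\sigma)J$ with transition maps multiplication by $x_1\cdots x_n$, pick $z \in J$ and $\sigma \ge 1$ with $\eta = [z]_\sigma$. Using the standard identification $R(\lceil(q-1)\Delta\rceil)^{1/q}\otimes_R H^n_\m(J) \cong H^n_\m(J^{[q]}R(\lceil(q-1)\Delta\rceil))^{1/q}$, Definition \ref{tight closure defn}(ii) translates into the existence of $d\in R^\circ$ such that for all large $q=p^e$,
\[
d\,\ba^{\lceil t(q-1)\rceil} z^q \in \mathcal{K}\bigl(\mathbf{x}^{q\sigma},\, \infty,\, J^{[q]} R(\lceil (q-1)\Delta \rceil)\bigr).
\]
Extending scalars to $S$ (module-finite over $R$) and using that $\rho^*\Delta$ is integral gives the same containment with $J^{[q]}R(\lceil(q-1)\Delta\rceil)$ replaced by $J^{[q]}S(q\rho^*\Delta)$. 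Symmetrically, unwinding $0^{*(\Delta,\ba^t){\rm fg}}_{H^n_\m(J)}$ shows that $\eta$ lies in it precisely when the analogous containment holds with $\infty$ replaced by a \emph{fixed} integer $t^* \ge 1$ independent of $q$, corresponding to the finitely generated submodule image of $J/(\mathbf{x}^{\sigma t^*})J$.

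The core of the argument is hypothesis (iii), applied with $s=\sigma$: multiplying the preceding containment by $c_e$ yields
\[
c_e\, d\,\ba^{\lceil t(q-1)\rceil} z^q \in \mathcal{K}\bigl(\mathbf{x}^{q\sigma},\, t_0,\, J^{[q]} S(q\rho^*\Delta)\bigr),
\]
with $t_0$ independent of $q$. To absorb $c_e$, I would mimic the proof of Proposition \ref{valuative criterion}: extend $\nu$ to $R^+$, apply \cite[Theorem 3.3]{HH} to obtain $\lambda>0$ and $r>0$ such that every $y\in R^+$ with $\nu(y)<\lambda$ admits an $R$-linear $\phi:R^+\to R$ with $\phi(y)\notin \m^r$. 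For each $q$, choose $q'=p^{e'}$ so large that $\nu(c_{ee'}^{1/q'})\le \nu(c_{e'})q/q'<\lambda$ (using (i) and (ii)), take $q'$-th roots of the above containment, and apply an $R$-linear map $\widetilde{\phi}:S(qq'\rho^*\Delta)^{1/q'}\to R(\lceil q\Delta\rceil)$ with $\widetilde{c}_q:=\widetilde{\phi}(c_{ee'}^{1/q'})\notin \m^r$. Since $R$-linear maps preserve $\mathcal{K}$-kernels, this produces
\[
\widetilde{c}_q\, d\, \ba^{\lceil tq\rceil} z^q \in \mathcal{K}\bigl(\mathbf{x}^{q\sigma},\, t_0,\, J^{[q]} R(\lceil q\Delta \rceil)\bigr).
\]
Setting $K_q:=\bigl(\mathcal{K}(\mathbf{x}^{q\sigma},t_0,J^{[q]}R(\lceil q\Delta\rceil)):_R d\,z^q\ba^{\lceil tq\rceil}\bigr)$, the same argument as in the last paragraph of the proof of Proposition \ref{valuative criterion} shows that $\{K_q\}$ is a descending chain each containing an element outside $\m^r$; hence by Chevalley's theorem $\bigcap_q K_q\neq 0$. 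Any $c^*\in R^\circ$ in the intersection satisfies
\[
c^* d\,\ba^{\lceil tq\rceil} z^q \in \mathcal{K}\bigl(\mathbf{x}^{q\sigma},\, t_0,\, J^{[q]} R(\lceil q\Delta \rceil)\bigr) \qquad \text{for all } q=p^e,
\]
which, by the finite-level characterization, places $\eta$ in the $(\Delta,\ba^t)$-tight closure of $0$ inside the finitely generated submodule image of $J/(\mathbf{x}^{\sigma t_0})J$, proving $\eta\in 0^{*(\Delta,\ba^t){\rm fg}}_{H^n_\m(J)}$.

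The main obstacle I anticipate is Step 1: the careful dictionary between the abstract tight-closure condition on $\eta$ (with its Frobenius twists by $R(\lceil(q-1)\Delta\rceil)^{1/q}$) and the concrete $\mathcal{K}$-kernel containment, together with the parallel dictionary for the finitistic version, especially since one must cleanly pass from $\lceil(q-1)\Delta\rceil$ to $\lceil q\Delta\rceil$ when absorbing $c_e$. Also delicate is verifying that the $K_q$ indeed form a descending chain, which requires, in the style of Proposition \ref{valuative criterion}, an argument that solutions at Frobenius level $q$ propagate to solutions at level $pq$; but once these points are accounted for, the rest is a straightforward execution of the MacCrimmon--Williams template.
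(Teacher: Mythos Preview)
Your overall strategy matches the paper's: translate $\eta\in 0^{*(\Delta,\ba^t)}_{H^n_\m(J)}$ into a $\mathcal{K}(\mathbf{x}^{q\sigma},\infty,\cdot)$ containment, use (iii) to replace $\infty$ by the fixed $t_0$, and then absorb the varying $c_e$ via the valuative machinery behind Proposition~\ref{valuative criterion}. The difference---and the gap---is in how you carry out this last step.

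You set $K_q=\bigl(\mathcal{K}(\mathbf{x}^{q\sigma},t_0,J^{[q]}R(\lceil q\Delta\rceil)):_R d\,z^q\ba^{\lceil tq\rceil}\bigr)$ and claim that ``the same argument as in the last paragraph of the proof of Proposition~\ref{valuative criterion}'' shows $\{K_q\}$ is descending. It does not. Unwinding the kernel, $K_q=\bigl(I^{[q]}R(\lceil q\Delta\rceil):_R d\,u^q\ba^{\lceil tq\rceil}\bigr)$ with $u=z(x_1\cdots x_n)^{\sigma(t_0-1)}$ and $I=(\mathbf{x}^{\sigma t_0})J$. In Proposition~\ref{valuative criterion} the analogous colon is taken against the \emph{tight closure} $(I^{[q]})^{*\lceil q\Delta\rceil}$, and the descending property is proved by taking an element $x\in J_{pq}$, using the tight-closure witness $b$ to write $b(xu^q)^{pQ}(\ba^{\lceil tq\rceil})^{[pQ]}\subset I^{[pqQ]}R(pQ\lceil q\Delta\rceil)$, and reading off $x\in J_q$. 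That manoeuvre requires the target to be a tight closure so that the extra power of $x$ introduced on the left can be absorbed into the test element $b$. With your $K_q$ the target is just $I^{[q]}R(\lceil q\Delta\rceil)$, and from $x\,d\,u^{pq}\ba^{\lceil tpq\rceil}\subset I^{[pq]}R(\lceil pq\Delta\rceil)$ there is no way to deduce $x\,d\,u^{q}\ba^{\lceil tq\rceil}\subset I^{[q]}R(\lceil q\Delta\rceil)$, since $x$ appears linearly rather than as $x^p$.

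The paper avoids this by not re-running the Chevalley argument at the $\mathcal{K}$-kernel level at all. After reaching $c_e\,dd'\,z^q\ba^{\lceil tq\rceil}S\subset\mathcal{K}(\mathbf{x}^{qs},t_0,J^{[q]}S(q\rho^*\Delta))$, it unpacks the kernel to the ideal containment
\[
c_e\,dd'\,\bigl(z(x_1\cdots x_n)^{s(t_0-1)}\bigr)^q\ba^{\lceil tq\rceil}S\subset\bigl((\mathbf{x}^{st_0})J\bigr)^{[q]}S(q\rho^*\Delta)
\]
and applies Proposition~\ref{valuative criterion} \emph{as a black box} with $u=z(x_1\cdots x_n)^{s(t_0-1)}$ and $I=(\mathbf{x}^{st_0})J$; the descending chain there is against $(I^{[q]})^{*\lceil q\Delta\rceil}$, where the argument you cite actually works. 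The output $u\in I^{*(\Delta,\ba^t)}$ then immediately places $\eta$ in $0^{*(\Delta,\ba^t)}_N$ for $N$ the image of $J/(\mathbf{x}^{st_0})J$. Your proof is easily repaired by making this same reduction before invoking Chevalley.
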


\begin{proof}
Let $\xi \in 0^{*(\Delta, \ba^t)}_{H^n_{\m}(J)}$, that is, there exists a nonzero element $d \in R$ such that $(d \ba^{\lceil tq \rceil})^{1/q} \otimes \xi=0$ in $R(\lceil q\Delta \rceil)^{1/q} \otimes_R H^n_{\m}(J)$ for all $q=p^e$. 
We identify $H^n_{\m}(J)$ with $\varinjlim J/(\mathbf{x}^{t})J$ and represent $\xi$ by the image of $z \Mod (\mathbf{x}^s)J \in J/(\mathbf{x}^{s})J$ for some $z \in J$ and $s \ge 1$. 
Since the natural map 
\[R(\lceil q \Delta \rceil)^{1/q} \otimes_R H^n_{\m}(J) \to \varinjlim \bigl(J^{[q]}R(\lceil q \Delta \rceil)\bigr)/\bigl((\mathbf{x}^{qt})J^{[q]}R(\lceil q \Delta \rceil)\bigr)\]
sends $1^{1/q} \otimes \xi$ to the image of $z^q \Mod (\mathbf{x}^{qs})J^{[q]}R(\lceil q \Delta \rceil)$, 
\[\textup{the image of } d z^q \ba^{\lceil tq \rceil}  \Mod (\mathbf{x}^{qs})J^{[q]}R(\lceil q \Delta \rceil)=0
\]
in $\varinjlim (J^{[q]}R(\lceil q \Delta \rceil))/((\mathbf{x}^{qt})J^{[q]}R(\lceil q \Delta \rceil))$  
for all $q=p^e$. 
Therefore, 
\[dd' z^q \ba^{\lceil tq \rceil}S \subset \mathcal{K}\bigl(\mathbf{x}^{qs}, \infty, J^{[q]}S(q\rho^*\Delta)\bigr)\] 
for a nonzero element $d' \in R(-\lceil \Delta \rceil)$, which implies by the condition (iii) that 
\[c_e dd' z^q \ba^{\lceil tq \rceil} S \subset \mathcal{K}\bigl(\mathbf{x}^{qs}, t_0, J^{[q]}S(q\rho^*\Delta)\bigr),\] that is, 
$c_e d d' z^q  (x_1 \cdots x_n)^{qs(t_0-1)} \ba^{\lceil tq \rceil} S \subset ((\mathbf{x}^{st_0})J)^{[q]}S(q\rho^*\Delta)$ for all $q=p^e$. 
Applying Proposition \ref{valuative criterion}, one has $z(x_1 \cdots x_n)^{s(t_0-1)} \in ((\mathbf{x}^{st_0}) J)^{*(\Delta, \ba^t)}$. 
This means that $\xi$, which is represented by the image of $z(x_1 \cdots x_n)^{s(t_0-1)} \Mod  (\mathbf{x}^{st_0})J \in J/(\mathbf{x}^{st_0})J$, belongs to $0^{*(\Delta, \ba^t)}_{N}$, where $N$ is the submodule of $H^n_{\m}(J)$ generated by the image of $J/ (\mathbf{x}^{st_0})J$. 
Thus, $\xi \in 0^{*(\Delta, \ba^t){\rm fg}}_{H^n_{\m}(J)}$. 
\end{proof}

Suppose that $R$ is a normal domain essentially of finite type over a field $k$ and $D$ is a $\Q$-Weil divisor on $X=\Spec R$. 
We say that $D$ is \textit{numerically $\Q$-Cartier} if there exist a normal affine variety $X'=\Spec R'$ over $k$ and a $\Q$-Weil divisor $D'$ on $X'$ such that $R$ is a localization of $R'$, $D$ is the pullback of $D'$ to $X$, and $D'$ is numerically $\Q$-Cartier at all $x \in X \subset X'$. 

We are now ready to prove our main theorem, which is a generalization of \cite[Proposition 3.7]{BSTZ} (cf.~\cite[Definition-Theorem 6.5]{HY}, \cite[Theorem 2.8 (2)]{Ta}). 
\begin{thm}\label{main thm}
Let $(R, \Delta, \ba^t)$ be a triple where $R$ is a normal domain essentially of finite type over an $F$-finite field $k$ of characteristic $p>0$. 
If $K_X+\Delta$ is numerically $\Q$-Cartier where $X=\Spec R$, then 
$\tau_{\rm b}(R, \Delta, \ba^t)=\tau_{\rm fg}(R, \Delta, \ba^t)$. 
\end{thm}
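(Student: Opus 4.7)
The plan is to follow the MacCrimmon--Williams strategy, combining the valuative characterization of tight closure in Proposition \ref{valuative criterion} with the control provided by Lemma \ref{valuation}. Since the inclusion $\tau_{\rm b}(R,\Delta,\ba^t) \subseteq \tau_{\rm fg}(R,\Delta,\ba^t)$ is immediate, the task is the reverse inclusion.

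First I would reduce to the complete local case. Since both test ideals localize compatibly at a maximal ideal, and Lemma \ref{finite test completion} together with Remark \ref{finite test rem}(2) shows that completion preserves the equality in question, we may assume $(R, \m)$ is a complete local $F$-finite normal domain of dimension $n$. I would then translate the desired equality into the assertion $0^{*(\Delta,\ba^t)}_{H^n_{\m}(J)} = 0^{*(\Delta,\ba^t){\rm fg}}_{H^n_{\m}(J)}$ for every divisorial ideal $J \subseteq R$, via the standard identification of test ideals as annihilators of tight closures of zero in the relevant local cohomology modules. This puts the problem in the form handled by Lemma \ref{key prop}.

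The next step is to construct the auxiliary data demanded by Lemma \ref{key prop}. Clearing denominators in $\Delta$, an iterated cyclic cover construction produces a module-finite normal extension $\rho: Z = \Spec S \to X$ such that $\rho^*\Delta$ is an integral Weil divisor, and I would fix a $\Q$-valued divisorial valuation $\nu$ on $S$ centered at a closed point above $\m$. The crucial use of the hypothesis is the following: the divisor $B := K_Z + \rho^*\Delta - R_\rho$, where $R_\rho$ is the ramification divisor of $\rho$, is the numerical pullback of $K_X+\Delta$ to $Z$, and hence is itself numerically $\Q$-Cartier at the center of $\nu$ by functoriality of the numerical pullback. Applying Lemma \ref{valuation} to $B$ produces nonzero elements $c_e \in \sO_{Z}(p^eB) \cdot \sO_{Z}(-p^eB)$ with $\nu(c_e)/p^e \to 0$; after passing to a subsequence we may arrange that $\{\nu(c_e)/p^e\}$ is monotonically decreasing.

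The main obstacle is verifying condition (iii) of Lemma \ref{key prop}: that multiplication by $c_e$ carries the infinite Koszul-type kernel $\mathcal{K}\bigl(\mathbf{x}^{qs}, \infty, J^{[q]}S(q\rho^*\Delta)\bigr)$ into the bounded kernel $\mathcal{K}\bigl(\mathbf{x}^{qs}, t_0, J^{[q]}S(q\rho^*\Delta)\bigr)$ for some integer $t_0 \geq 2$ independent of both $s$ and $q$. The intuition is that the failure of the reflexive sheaf $\sO_Z(q\rho^*\Delta)$ to be locally free (and thus Cohen--Macaulay along $\mathbf{x}$) is controlled precisely by the defect of $qB$ from being Cartier at the center of $\nu$, and the $c_e$ were constructed to measure this defect to order $p^e$. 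Making this precise will require a local comparison of $\sO_Z(q\rho^*\Delta)$ with the Cartier sheaf $\sO_Z(qB + qR_\rho - qK_Z)$ and showing that after multiplying by $c_e$ the resulting submodule is carried into a free rank-one $S$-module up to a uniformly bounded index $t_0$, using that Frobenius pullback commutes with the numerical comparison up to scaling. Once this is established, Lemma \ref{key prop} yields $0^{*(\Delta,\ba^t)}_{H^n_{\m}(J)} = 0^{*(\Delta,\ba^t){\rm fg}}_{H^n_{\m}(J)}$, and undoing the reductions of the first paragraph completes the proof.
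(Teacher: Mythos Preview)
Your overall strategy matches the paper's, but there are genuine gaps in the execution. First, you should not aim for the equality for \emph{every} divisorial ideal $J$: the paper takes specifically $J = R(-D)$ with $D \sim -K_X$, so that $H^n_{\m}(J) \cong E_R(R/\m)$ and the annihilator of the tight closure of zero there is exactly the test ideal. For arbitrary $J$ the statement is neither needed nor obviously true, and more importantly the argument for condition (iii) below depends essentially on this particular choice. Second, ``passing to a subsequence'' to arrange monotonicity is not permitted, since condition (iii) of Lemma \ref{key prop} demands the containment for \emph{all} $q=p^e$. The paper instead obtains monotonicity for the full sequence from the containment $I_e^p \subset I_{e+1}$, where $I_e = S(p^e\rho^*(D-\Delta))\,S(p^e\rho^*(\Delta-D))$. (There is also an order-of-operations issue: Lemma \ref{valuation} is stated for varieties, so one must apply it on a variety model \emph{before} completing and then transport the resulting limit to $\widehat{R}$.)

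The most serious gap is your verification of condition (iii). Your proposed comparison is circular: with $B = K_Z + \rho^*\Delta - R_\rho$, Riemann--Hurwitz gives $qB + qR_\rho - qK_Z = q\rho^*\Delta$, so you are comparing $\sO_Z(q\rho^*\Delta)$ with itself and no information is gained. The paper's argument is concrete and uses ingredients you do not mention. One introduces the sandwich module $M_e = S(p^e\rho^*(\Delta-D))$, which satisfies $J^{[q]}S(q\rho^*\Delta) \subset M_e \subset S(q\rho^*\Delta)$ precisely because $J = R(-D)$; one then chooses the system of parameters with $x_1 \in J$ and $x_2 \in R(D)R(-D)$, so that monomials $(x_1 \cdots \widehat{x_i} \cdots x_n)^{qs}$ annihilate the quotient $M_e/J^{[q]}S(q\rho^*\Delta)$; next one applies colon-capturing in $S$ together with a big test element $c$ for $S$ to obtain $c I_e \cdot \mathcal{K}(\mathbf{x}^{qs}, \infty, M_e) = 0$; and finally one invokes the short-exact-sequence lemma \cite[Lemma A.3]{Ha} to transfer the kernel bound from $M_e$ down to $J^{[q]}S(q\rho^*\Delta)$ with $t_0 = 2$. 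The big test element $c$ is essential here and is why the final sequence is $\{c d_e\}$ (with $d_e \in I_e$ of minimal $\nu$-value) rather than elements coming from $I_e$ alone.
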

\begin{proof}
We may assume that $(R,\m)$ is a local ring by essentially the same argument as the proof of \cite[Proposition 3.7]{BSTZ}. 
Choose a point $x$ of a normal affine variety $X'$ over $k$ and a $\Q$-Weil divisor $\Delta'$ on $X'$ such that $R \cong \sO_{X', x}$, $\Delta \cong \Delta'_x$ and $K_{X'}+\Delta'$ is numerically $\Q$-Cartier at $x$. 
Take an effective Weil divisor  $D'$ on $X'$ such that $D' \sim -K_{X'}$. 
Let $\pi':Y' \to X'$ be a regular alteration and $Y' \xrightarrow{\mu'} Z' \xrightarrow{\rho'} X'$ denote its Stein factorization. 
After possibly passing to a further alteration, we may assume that ${\rho'}^*(\Delta'-D')$ is an integral divisor on $Z'$. 
Fix a point $z \in Z'$ lying over $x \in X'$ and a divisorial valuation $\nu$ on $Z'$ centered at $z$. 
By Lemma \ref{valuation}, 
\[\lim_{m \to \infty}\frac{\nu \Bigl(\sO_{Z'} \bigl(m{\rho'}^*\left(\Delta'-D'\right)\bigr)\sO_{Z'}\bigl(-m{\rho'}^*\left(\Delta'-D'\right)\bigr)\Bigr)}{m} =0.\] 
Let $\pi:Y \xrightarrow{\mu} Z \xrightarrow{\rho} X=\Spec R$ be the flat base change of $\pi'=\mu' \circ \rho'$ via $X \to X'$ and $D$ be the pullback of $D'$ to $X$. 
Then $D$ is an effective Weil divisor on $X$ such that $D \sim -K_X$ and $\rho^*(\Delta-D)$ is an integral divisor on $Z$. 
Set $S=\rho_*\sO_Z$, which is a normal domain and a module-finite extension of $R$. 

It follows from Lemma \ref{finite test completion} and Remark \ref{finite test rem} (2) that 
\begin{align*}
\tau_{\rm fg}(R, \Delta, \ba^t)&=\tau_{\rm fg}\bigl(\widehat{R}, \widehat{\Delta}, (\ba \widehat{R})^t \bigr) \cap R,\\
\tau_{\rm b}(R, \Delta, \ba^t)&=\tau_{\rm b}\bigl(\widehat{R}, \widehat{\Delta}, (\ba \widehat{R})^t \bigr) \cap R,
\end{align*} 
where $\widehat{R}$ is the $\m$-adic completion of $R$ and $\widehat{\Delta}$ is the pullback of $\Delta$ to $\Spec \widehat{R}$.  
On the other hand, 
the $\m$-adic completion $S \otimes_R \widehat{R}$ of $S$ is a finite product $S_1 \times \cdots \times S_r$ of complete normal local rings $(S_i, \n_i)$ which are module-finite extensions of $\widehat{R}$. 
Note that after reindexing if necessary, we can assume that $(S_1, \n_1)$ is isomorphic to the maximal-ideal-adic completion of $\sO_{Z',z}$. 
Therefore, by \cite[Proposition 9.3.5]{HS}, the divisorial valuation $\nu$ can be extended to a divisorial valuation $\widehat{\nu}$ on $\Spec S_1$ centered at $\n_1$ such that 
\[
\lim_{m \to \infty}\frac{\widehat{\nu}\Bigl(S_1\bigl(m{\rho_1}^*(\widehat{\Delta}-\widehat{D})\bigr)S_1\bigl(-m{\rho_1}^*(\widehat{\Delta}-\widehat{D})\bigr)\Bigr)}{m} =0, 
\]
where $\rho_1:\Spec S_1 \to \Spec \widehat{R}$ is the finite morphism induced by $\rho$ and $\widehat{D}$ is the pullback of $D$ to $\Spec \widehat{R}$. 
Thus, by passing to completion, we may assume that $(R, \m)$ and $(S, \n)$ are complete local rings of characteristic $p>0$ and of dimension $n \ge 2$, $\nu$ is a divisorial valuation on $\Spec S$ centered at $\n$, and 
\begin{equation}
\lim_{m \to \infty}\frac{\nu\Bigl(S\bigl(m{\rho}^*(\Delta-D)\bigr)S\bigl(-m{\rho}^*(\Delta-D)\bigr)\Bigr)}{m} =0. \tag{$\spadesuit$}
\end{equation}
Set $J=R(-D) \subset R$, and we will show that $0^{*(\Delta, \ba^t)}_{H^n_{\m}(J)}=0^{*(\Delta, \ba^t){\rm fg}}_{H^n_{\m}(J)}$. 

Let $x_1 \in J$ be an arbitrary nonzero element.
We can choose an element $x_2 \in R(D)R(-D)$ such that $x_2$ is $R/(x_1)$-regular, because $R(D)R(-D) \subset R$ is the unit ideal or has height $\ge 2$. 
We extend $x_1, x_2$ to a system of parameters $x_1, x_2, \dots, x_n$ for $R$. 
For each $e \ge 1$, set $M_e=S(p^e \rho^*(\Delta-D))$. 
Then 
\begin{align*}
& J^{[q]} S(q\rho^*\Delta)  \subseteq M_e \subseteq S(q\rho^*\Delta), \\
& x_2^{q} M_e  \subseteq J^{[q]} R(qD) M_e  \subseteq J^{[q]} S(q\rho^*\Delta)
\end{align*}
for all $q=p^e$. 
Since $x_1^q \in J^{[q]}$, this implies that $(x_1 \cdots x_{i-1}x_{i+1} \cdots x_n)^{qs}M_e$ is contained in $J^{[q]}S(q\rho^*\Delta)$ for all integers $s \ge 1$ and $1 \le i \le n$. 
Therefore, the map 
\[
H^{n-1}\bigl(\mathbf{x}^{qrs}; M_e/J^{[q]}S(q\rho^*\Delta)\bigr) \to H^{n-1}\bigl(\mathbf{x}^{q(r+1)s}; M_e/J^{[q]}S(q\rho^*\Delta)\bigr)
\]
between Koszul cohomology modules is zero for all $r,s \ge 1$ and all $q=p^e$. 
On the other hand, fix any power $q=p^e$ of $p$ and suppose that $z \Mod (\mathbf{x}^{qs})M_e \in \mathcal{K}(\mathbf{x}^{qs}, \infty, M_e)$ with $z \in M_e$, 
that is, there exists an integer $r \ge 1$ such that $(x_1 \cdots x_n)^{q(r-1)s}z \in (\mathbf{x}^{qrs})M_e$.  
Since 
\[(x_1 \cdots x_n)^{q(r-1)s}  z S\bigl(q\rho^*(D-\Delta)\bigr)  \subseteq  (\mathbf{x}^{qrs}) S\bigl(q\rho^*(D-\Delta)\bigr)  M_e \subseteq (\mathbf{x}^{qrs})S,\]
it follows from the colon-capturing property of tight closure \cite[Theorem 4.7]{HH1} that $S(q\rho^*(D-\Delta)) z \subseteq ((\mathbf{x}^{qs})S)^*$. 
Let $c \in S$ be a big test element for $S$ and set $I_{e}= S(q\rho^*(D-\Delta))M_e \subset S$ for each $q=p^e$.  
Then $c I_{e} z \subseteq  (\mathbf{x}^{qs}) M_e$ by the definition of big test elements, which implies that 
$c I_{e}\mathcal{K}(\mathbf{x}^{qs}, \infty, M_e)=0$ for all $s \ge 1$ and all $q=p^e$. 

Now applying \cite[Lemma A.3]{Ha} (cf.~\cite{Mc}) to the exact sequence 
\[
0 \to J^{[q]}S(q\rho^*\Delta) \to M_e \to M_e/J^{[q]}S(q\rho^*\Delta) \to 0, 
\]
we see that 
\[
c I_{e} \mathcal{K}\bigl(\mathbf{x}^{qs}, \infty, J^{[q]}S(q\rho^*\Delta)\bigr) \subset \mathcal{K}\bigl(\mathbf{x}^{qs}, 2, J^{[q]}S(q\rho^*\Delta)\bigr) 
\]
for all $s \ge 1$ and all $q=p^e$. 
Choose an element $d_e \in I_e$ such that 
\[\nu(d_e)=\min \{\nu(f) \, | \, f \in I_e\}=\nu(I_e)\] for every $e \ge 0$, and we will check that the conditions (i)-(iii) in Lemma \ref{key prop} are satisfied for the sequence $\{cd_e\}$. 
We have already seen that (iii) is satisfied for $\{c d_e\}$. 
Since $\lim_{e \to \infty}\nu(cd_e)/p^e=\lim_{e \to \infty} (\nu(c)+\nu(I_e))/p^e=0$ by $(\spadesuit)$, the condition (ii) is also satisfied for $\{c d_e\}$. 
Finally we use the containment $I_e^p \subset I_{e+1}$ for every $e \ge 0$ to verify the condition (i) for $\{cd_e\}$. 
Thus, the assertion follows from Lemma \ref{key prop}. 
\end{proof}

\begin{rem}
(1) When $\Delta=0$ and $\ba=R$, by applying \cite[Proposition 8.23 (e)]{HH1} (resp. \cite[Theorem 3.6 (2)]{AE}, \cite[Theorem 3.1]{HH}) instead of Lemma \ref{finite test completion}  (resp. Remark \ref{finite test rem} (2), Proposition \ref{valuative criterion}), one can show that Theorem \ref{main thm} holds without assuming that $R$ is $F$-finite.  

(2) 
As a generalization of $\Q$-Gorensteinness, the finite generation of the anti-canonical ring is discussed in \cite{CEMS}. 
In the setting of Theorem \ref{main thm}, assume that the anti-canonical ring $\bigoplus_{i \ge 0}\sO_X(-\lfloor i(K_X+\Delta) \rfloor)$ is finitely generated instead of assuming that $K_X+\Delta$ is numerically $\Q$-Cartier.  
It then follows from \cite[Corollary 5.7]{CEMS} that $\tau_{\rm b}(R, \Delta)=\tau_{\rm fg}(R, \Delta)$.
We do not know whether the equality $\tau_{\rm b}(R, \Delta, \ba^t)=\tau_{\rm fg}(R, \Delta, \ba^t)$ holds for any $\ba \subset R$ and $t>0$ under the assumption that $\bigoplus_{i \ge 0}\sO_X(-\lfloor i(K_X+\Delta)\rfloor)$ is finitely generated. 
\end{rem}

Since every normal surface is numerically $\Q$-factorial, the following holds. 
\begin{cor}\label{two-dimensional}
Let $(R, \Delta, \ba^t)$ be a triple where $R$ is a two-dimensional $F$-finite normal domain of characteristic $p>0$. Then 
$\tau_{\rm b}(R, \Delta, \ba^t)=\tau_{\rm fg}(R, \Delta, \ba^t)$. 
\end{cor}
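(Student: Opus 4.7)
The strategy is to adapt the proof of Theorem \ref{main thm} to the setting of an arbitrary $F$-finite two-dimensional normal domain, replacing the regular alterations of varieties with Lipman's resolution of singularities. Since every normal surface is numerically $\Q$-factorial by Example \ref{eg num Cartier}(2), $K_X+\Delta$ is automatically numerically $\Q$-Cartier, so the only real issue is that Theorem \ref{main thm} was stated under the additional assumption that $R$ be essentially of finite type over an $F$-finite field.

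First I would localize at a maximal ideal of $R$ and then pass to the $\m$-adic completion, using Lemma \ref{finite test completion} and Remark \ref{finite test rem}(2), to reduce to the case where $R$ is a complete two-dimensional $F$-finite normal local domain. Next, choose an effective Weil divisor $D\sim -K_X$ on $X=\Spec R$, and take a finite normal cover $\rho:Z\to X$ such that $\rho^*(\Delta-D)$ is an integral Weil divisor; this can be arranged by a standard index-one cover since $R$ is $F$-finite and complete. Set $S=\sO_{Z,z}$ for the unique point $z$ lying above the closed point of $X$. By Lipman's theorem there is a resolution of singularities $\mu:W\to \Spec S$, and the Hodge index theorem provides Mumford's pullback $\mu^{*}_{\textup{num}}\rho^*(\Delta-D)$ as a $\mu$-numerically trivial $\Q$-Cartier $\Q$-divisor on $W$.

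The core technical ingredient is the complete-local analogue of Lemma \ref{valuation}: for any divisorial valuation $\nu$ on $S$ centered at the maximal ideal,
\[
\lim_{m\to\infty}\frac{\nu\bigl(S(m\rho^{*}(\Delta-D))\,S(-m\rho^{*}(\Delta-D))\bigr)}{m}=0.
\]
I would prove this by mimicking the argument of \cite[Proposition 5.9]{BdFFU}: sections of $\sO_Z(\pm m\rho^*(\Delta-D))$ pull back to divisors on $W$ bounded by $\pm m\,\mu^{*}_{\textup{num}}\rho^*(\Delta-D)$ up to $\mu$-exceptional error, and the ``defect'' is sublinear in $m$ because Mumford's pullback is $\Q$-Cartier. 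I expect this step to be the main obstacle, because we are working outside the variety setting of \cite{BdFF,BdFFU}; the Hodge-index / Mumford-pullback mechanism is, however, intrinsically local, so the argument should transfer without essential change.

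With this limit in hand, I would run the remainder of the proof of Theorem \ref{main thm} verbatim: choose a big test element $c$ for $S$, set $J=R(-D)$ and $I_e=S(p^{e}\rho^{*}(D-\Delta))\,S(p^{e}\rho^{*}(\Delta-D))$, produce $d_e\in I_e$ minimizing $\nu(I_e)$, verify conditions (i)--(iii) of Lemma \ref{key prop} for the sequence $\{cd_e\}$, and deduce $0^{*(\Delta,\ba^t)}_{H^2_\m(J)}=0^{*(\Delta,\ba^t){\rm fg}}_{H^2_\m(J)}$, whence $\tau_{\rm b}(R,\Delta,\ba^t)=\tau_{\rm fg}(R,\Delta,\ba^t)$.
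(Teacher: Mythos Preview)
Your proposal is correct and follows essentially the same route as the paper. The paper's proof is very terse: it notes that $\Spec R$ is numerically $\Q$-factorial (Example~\ref{eg num Cartier}(2)), asserts that Lemma~\ref{valuation} holds for all two-dimensional excellent normal integral schemes via the Hodge index theorem, and then says the assertion follows by an argument ``very similar to the proof of Theorem~\ref{main thm}''. You have simply unpacked that last clause---replacing de~Jong's alterations by Lipman's resolution plus a finite cover to clear denominators, and isolating the local analogue of Lemma~\ref{valuation} as the one genuinely new ingredient---which is exactly what the paper intends. One terminological quibble: the cover making $\rho^*(\Delta-D)$ integral is not really an ``index-one cover'' (that term is reserved for the cyclic cover trivializing a $\Q$-Cartier divisor); you just need a finite extension of $R$ ramified to sufficient order along the components of $\Delta-D$, which is straightforward over a complete local ring.
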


\begin{proof}
First, $\Spec R$ is numerically $\Q$-factorial by Example \ref{eg num Cartier} (2).  
Second, using the Hodge index theorem \cite[Theorem 10.1]{Ko}, we can verify that Lemma \ref{valuation} holds for all two-dimensional excellent normal integral schemes. 
Thus, the assertion follows from an argument very similar to the proof of Theorem \ref{main thm}. \end{proof}

\begin{rem}
(1) When $\Delta=0$ and $\ba=R$, Corollary \ref{two-dimensional} follows from \cite[Theorem 8.8]{LS}. 

(2) Combining Corollary \ref{two-dimensional} and \cite[Theorem 1]{dFDTT}, we see that if $X=\Spec R$ is a normal affine surface over an uncountable algebraically closed field of characteristic zero and $\Delta$ is an effective $\Q$-Cartier $\Q$-Weil divisor on $X$, then the multiplier ideal $\mathcal{J}(X, \Delta)$ in the sense of \cite{dFH} coincides, after reduction to characteristic $p \gg 0$, with the finitistic test ideal $\tau_{\rm fg}(R_p, \Delta_p)$ associated to modulo $p$ reduction $(R_p, \Delta_p)$ of $(R, \Delta)$. 
\end{rem}


\end{document}